\theoremstyle{plain}
\newtheorem{theorem}{Theorem}[section]
\newtheorem{lemma}[theorem]{Lemma}
\newtheorem{proposition}[theorem]{Proposition}
\newtheorem{corollary}[theorem]{Corollary}
\numberwithin{equation}{section}
\theoremstyle{definition}
\newtheorem{definition}[theorem]{Definition}
\newtheorem{example}[theorem]{Example}
\newtheorem{exercise}[theorem]{Exercise}
\newtheorem{remarks}[theorem]{Remarks}
\newcommand{\id}{\operatorname{id}}
\newcommand{\Coker}{\operatorname{Coker}}
\newcommand{\Hom}{\operatorname{Hom}}
\newcommand{\tr}{\operatorname{tr}}
\newcommand{\End}{\operatorname{End}}
\newcommand{\im}{\operatorname{im}}
\newcommand{\wt}{\operatorname{wt}}
\renewcommand{\dim}{\operatorname{dim}}
\newcommand{\C}{{\mathbb{C}}}
\newcommand{\Z}{{\mathbb{Z}}}
\newcommand{\N}{{\mathbb{N}}}
\newcommand\g{\mathfrak{g}}
\newcommand\bv{\mathbf{v}}
\newcommand\bw{\mathbf{w}}
\newcommand\be{\mathbf{e}}
\newcommand\st{\mathrm{st}}
\newcommand\Gr{\mathrm{Gr}}
\newcommand\Fl{\mathrm{Fl}}
\newcommand\None{n_1}
\newcommand\Ntwo{n_2}
\newcommand\nk{n_k}
\newcommand\lecturecite[2]{#2}
\newcounter{x}
\newcounter{y}
\newcounter{z}
\newcommand\xaxis{210}
\newcommand\yaxis{-30}
\newcommand\zaxis{90}
\newcommand\topside[3]{
  \draw[fill=gray, draw=black,shift={(\xaxis:#1)},shift={(\yaxis:#2)},
  shift={(\zaxis:#3)}] (0,0) -- (30:1) -- (0,1) --(150:1)--(0,0);
}
\newcommand\leftside[3]{
  \fill[fill=gray!20!white, draw=black,shift={(\xaxis:#1)},shift={(\yaxis:#2)},
  shift={(\zaxis:#3)}] (0,0) -- (0,-1) -- (210:1) --(150:1)--(0,0);
}
\newcommand\rightside[3]{
  \fill[fill=white, draw=black,shift={(\xaxis:#1)},shift={(\yaxis:#2)},
  shift={(\zaxis:#3)}] (0,0) -- (30:1) -- (-30:1) --(0,-1)--(0,0);
}
\newcommand\cube[3]{
  \topside{#1}{#2}{#3} \leftside{#1}{#2}{#3} \rightside{#1}{#2}{#3}
}
\newcommand\planepartition[1]{
 \setcounter{x}{-1}
  \foreach \a in {#1} {
    \addtocounter{x}{1}
    \setcounter{y}{-1}
    \foreach \b in \a {
      \addtocounter{y}{1}
      \setcounter{z}{-1}
      \foreach \c in {1,...,\b} {
        \addtocounter{z}{1}
        \cube{\value{x}}{\value{y}}{\value{z}}
      }
    }
  }
}
\DeclareMathOperator\bdim{\mathbf{dim}}
\newcommand{\comments}[1]{}
\begin{document}

\author{Alistair Savage}
\thanks{This research is supported by the Natural
Sciences and Engineering Research Council (NSERC) of Canada}

\date{\today}

\lecturecite{\title{Geometric realizations of
crystals}}{\title{Lectures on geometric realizations of crystals}}

\begin{abstract}
These are notes for a lecture series given at the Fields Institute
Summer School in Geometric Representation Theory and Extended Affine
Lie Algebras, held at the University of Ottawa in June 2009.  We
give an introduction to the geometric realization of crystal graphs
via the quiver varieties of Lusztig and Nakajima.  The emphasis is
on motivating the constructions through concrete examples.  The
relation between the geometric construction of crystals and
combinatorial realizations using Young tableaux is also discussed.
\end{abstract}

\maketitle \thispagestyle{empty}

\tableofcontents

%
\section*{Introduction}
%

These are notes for a lecture series given by the author at the
Fields Institute Summer School in Geometric Representation Theory
and Extended Affine Lie Algebras, held at the University of Ottawa
in June 2009 and organized by the author and Erhard Neher.  Each
section corresponds to an 80 minute lecture given at the school.

The goal of these lectures is to give an introduction to the
geometric realization of crystal graphs via quiver varieties.  The
aim throughout is to adequately motivate the definitions so that the
reader gains an intuition for the constructions.  For this reason,
concrete examples are discussed in detail and mathematical rigor is
sometimes sacrificed in the name of exposition.  The hope is that
after studying these lectures, the reader will have an intuitive
grasp of the theory and several specific examples at hand that will
equip him or her to explore the literature on this subject.
References are given for proofs or arguments that have been omitted.

Section~\ref{S:sec:motivation} is dedicated to motivating the
definitions to follow in the other four lectures.  Several examples
are discussed in detail, noting connections to ideas that appeared
in the lectures of Kamnitzer \cite{Kamnitzer} and Kang \cite{Kang}
at the same summer school. Equipped with these examples, the
discussion of the general theory begins in
Section~\ref{S:sec:quivers} where the notions of quivers and their
representations are introduced.  In Section~\ref{S:sec:LQV}, we
define the Lusztig quiver varieties and the crystal structure on
their sets of irreducible components.  We see how one obtains the
crystal corresponding to half of the quantized enveloping algebra of
a symmetric Kac-Moody algebra.  The lagrangian Nakajima quiver
varieties are introduced in Section~\ref{S:sec:Nak-QV}.  Here we
define the crystal structure on the sets of irreducible components
and obtain the crystals corresponding to irreducible integrable
highest weight representations.  Finally, in
Section~\ref{S:sec:connections} we describe the relationship between
the geometric realizations of crystals using Nakajima quiver
varieties and the well known combinatorial realizations using
tableaux.  In this final section we restrict our attention to the
Lie algebra $\mathfrak{sl}_n$.

\textbf{Prerequisites.} In these notes, we assume a basic knowledge
of Kac-Moody Lie algebras.  In particular, we assume the reader is
familiar with their definitions and the basics of the theory of
irreducible integrable highest weight representations.   We also
presuppose a knowledge of the basic definitions of crystals.
Students in the summer school benefited from a course on this
subject \cite{Kang}.  A more comprehensive treatment can be found in
the book \cite{HK}.  Some background in algebraic geometry would be
helpful in following these notes, but the reader willing to take
some results on faith should be able to follow the presentation.

\textbf{Acknowledgements.} The author would like to thank all of the
participants of the summer school for their enthusiasm and
insightful questions and the other speakers of the summer school
(Vyjayanthi Chari, Joel Kamnitzer, Seok-Jin Kang, Erhard Neher and
Weiqiang Wang) for their interesting lecture series.

\lecturecite{}{\textbf{Summer school lecture notes.} These notes,
along with lectures notes from the other speakers at the University
of Ottawa Fields Institute Summer School on Geometric Representation
Theory and Extended Affine Lie Algebras will eventually appear in
the Fields Institute Monograph Series.  Until then, notes (as well
as video of the lectures) can be found at
\begin{center}
\texttt{http://av.fields.utoronto.ca/video/08-09/geomrep/}.
\end{center}
}

%
\section{Motivating examples} \label{S:sec:motivation}
%

In this first section, we discuss some motivating examples for the
theory that will be introduced in future sections.  We will
explicitly work out various special cases of the general objects we
will introduce later (such as quiver varieties).  Our goal is to
create a collection of concrete examples that will guide our
intuition and serve as motivation for the general definitions to
follow. We begin by considering the following table.

\bigskip
\begin{center}
\begin{tabular}{ccc}
  \toprule
  Geometry & Algebra (Representation Theory) & Combinatorics (Crystals) \\
  \midrule
  \parbox{2.5cm}{\begin{center} Varieties \\ (components)
  \end{center}}
  & Vector space (basis) & Vertex set \\ \\
  Correspondences & \parbox{5cm}{\begin{center} Lie algebra action
  (Chevalley generators) \end{center}} &
  Crystal operators \\
  \bottomrule
\end{tabular}
\end{center}
\bigskip

In \lecturecite{\textbf{Refer to Joel's chapter}}{\cite{Kamnitzer}},
Kamnitzer explained some relations between the first and second
columns in this table. Namely he described certain varieties whose
homology yielded the underlying vector space $V$ of a representation
of a Lie algebra $\g$ and correspondences which produced operators
realizing the action of Chevalley generators of $\g$ on $V$.  Then,
in \lecturecite{\textbf{Refer to Kang's chapter}}{\cite{Kang}}, Kang
explained the passage from the second column to the third column.
More precisely, he described how certain nice bases in
representations of $U_q(\g)$ yield the vertices of a crystal graph
in the $q \to 0$ limit and how the action of the Chevalley
generators is replaced by crystal operators (colored directed edges
of this graph).

In the current chapter, we will describe a general process in which
one can pass from the first column directly to the third.  In
particular, for any (symmetric) Kac-Moody algebra, one can define
what are called quiver varieties and from these we can obtain the
crystal graph directly. The vertex set of the crystal is the set of
irreducible components of the varieties and the crystal operators
are given by natural geometric operators closely related to
correspondences.

We begin by considering a specific example of the construction.  Let
$\g = \mathfrak{sl}_n$, the Lie algebra of $n \times n$ traceless
matrices.  We let $E_{ij}$ be the matrix with $(i,j)$ entry equal to
one and all other entries equal to zero. Then
\[
  \{e_k := E_{k,k+1},\ f_k:=E_{k+1,k}\}_{1 \le k \le n-1}
\]
are the \emph{Chevalley generators} of $\g$.  We let $\mathfrak{h}$
be the \emph{Cartan subalgebra} consisting of traceless diagonal
matrices.  Then $\mathfrak{h}$ has basis $\{h_k:=
E_{k,k}-E_{k+1,k+1}\}_{1 \le k \le n-1}$.

We define $\varepsilon_k \in \mathfrak{h}^*$, by
$\varepsilon_k(E_{l,l})=\delta_{k,l}$ for $1 \le l,k \le n$.  Note
that $\varepsilon_1 + \dots + \varepsilon_n=0$ since we consider
traceless matrices.  The dual space $\mathfrak{h}^*$ has a basis
consisting of \emph{simple roots} $\{\alpha_k := \varepsilon_k -
\varepsilon_{k+1}\}_{1 \le k \le n-1}$.  We also define the
\emph{fundamental weights} $\omega_k = \varepsilon_1 + \dots +
\varepsilon_k$, $1 \le k \le n-1$.  Then
\[
  P := \bigoplus_{k=1}^{n-1} \Z \omega_i\quad \text{and} \quad
  P^+ := \bigoplus_{k=1}^{n-1} \N \omega_i
\]
are the \emph{weight lattice} and \emph{dominant weight lattice}
respectively.  Elements of $P^+$ are called \emph{dominant integral
weights}. Any $w = w_1 \omega_1 + \dots + w_{n-1} \omega_{n-1} \in
P^+$ can be written in the form
\[
  w = \lambda_1 \varepsilon_1 + \dots + \lambda_{n-1}
  \varepsilon_{n-1},\quad \lambda_k = w_k + \dots + w_{n-1}.
\]
Thus $w$ corresponds to a partition
\[
  \lambda(w) = (\lambda_1, \lambda_2, \dots,
  \lambda_{n-1}),\quad \lambda_i \le \lambda_{i+1},\ i=1,\dots,n-1.
\]
In this way, we will often identify the set of partitions of length
$n-1$ with the dominant weight lattice.

Finite-dimensional representations $V$ of $\g$ have a weight space
decomposition
\[
  V = \bigoplus_{\mu \in P} V_\mu,\quad V_\mu = \{v \in V\ |\ h
  \cdot v = \mu(h) v\ \forall\ h \in \mathfrak{h}\}.
\]
The action of $\g$ on itself yields the \emph{adjoint
representation} and the corresponding weight space decomposition
\[
  \g = \mathfrak{h} \oplus \bigoplus_{\alpha \in \Phi}
  \g_\alpha,\quad \dim \g_\alpha=1 \ \forall\ \alpha \in \Phi,\ \dim
  \mathfrak{h}=n-1,
\]
is called the \emph{root space decomposition} of $\g$.  Here $\Phi
\subseteq P$ is the \emph{set of roots} of $\g$.

Irreducible representations of $\g$ are labeled by their highest
weight (an element of $P^+$).  We denote the irreducible
representation of $\g$ of highest weight $w \in P^+$ by $V(w)$, or
$V(\lambda(w))$ when we wish to label it by the corresponding
partiion.  The adjoint representation of $\g$ is isomorphic to
$V(\omega_1 + \omega_{n-1})$ as a $\g$-module.

In \lecturecite{Section~\ref{K:section_crystal} (see
Example~\ref{K:ex:realization of
B(lambda)})}{\cite[Section~4]{Kang}} Kang described a realization of
the crystals of irreducible representations of $\g=\mathfrak{sl}_n$
(or $\g = \mathfrak{gl}_n$) via tableaux (see \cite{HK} for a more
detailed description). Consider the example of $\g =
\mathfrak{sl}_3$ and $V=V(\omega_1 + \omega_2)$ the adjoint
representation.  Let $w=\omega_1 + \omega_2$ be the highest weight
and let $\lambda = \lambda(w) = (2,1)$ be the corresponding
partition. Then $B(\lambda)$ is the set of semistandard tableaux of
shape $\lambda$ and the crystal is as in
Figure~\ref{S:fig:crystal-graph}\lecturecite{ (see
Example~\ref{K:ex:realization of B(lambda)})}{}.

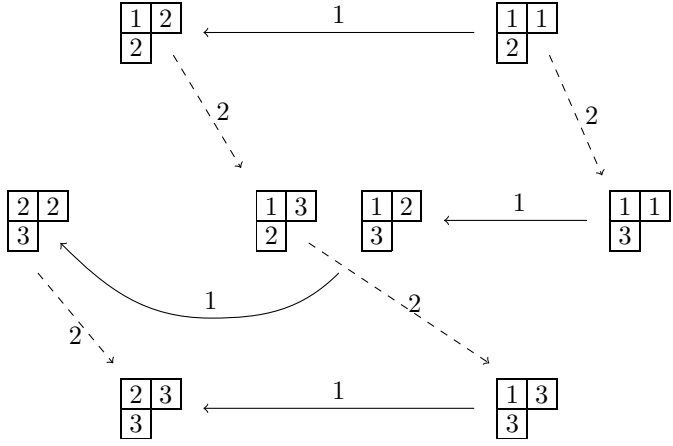
\begin{figure}
\begin{center}
\begin{tikzpicture}
  \draw (-2.5,2.5) node {$\young(12,2)$};
  \draw (2.5,2.5) node {$\young(11,2)$};
  \draw (-0.7,0) node {$\young(13,2)$};
  \draw (0.7,0) node {$\young(12,3)$};
  \draw (4,0) node {$\young(11,3)$};
  \draw (-4,0) node {$\young(22,3)$};
  \draw (-2.5,-2.5) node {$\young(23,3)$};
  \draw (2.5,-2.5) node {$\young(13,3)$};
  \draw[->] (1.8,-2.5) to (0,-2.5) node[above] {1} to (-1.8,-2.5);
  \draw[->] (1.8,2.5) to (0,2.5) node[above] {1} to (-1.8,2.5);
  \draw[->] (3.3,0) to (2.4,0) node[above] {1} to (1.4,0);
  \draw[->] (0,-0.7) to[out=225,in=0] (-1.7,-1.3) node[above] {1}
    to[out=180,in=-45] (-3.7,-0.3);
  \draw[->,dashed] (-2.2,2.2) to (-1.75,1.45) node[right] {2} to (-1.3,0.7);
  \draw[->,dashed] (-0.4,-0.3) to (0.8,-1.1) node[right] {2} to (2,-1.9);
  \draw[->,dashed] (2.8,2.2) to (3.15,1.4) node[right] {2} to (3.5,0.6);
  \draw[->,dashed] (-4,-0.7) to (-3.5,-1.3) node[below] {2} to (-3,-1.9);
\end{tikzpicture}
\end{center}
\caption{Crystal graph of $B((2,1))$, for $\g=\mathfrak{sl}_3$.}
\label{S:fig:crystal-graph}
\end{figure}

Recall that the weight of a tableaux $T$ is equal to $\sum_{i=1}^n
(\text{\# $i$'s in $T$}) \varepsilon_i$.  In the above figure, the
tableaux are grouped according to weight.  The highest weight is $2
\varepsilon_1 + \varepsilon_2 = \omega_1 + \omega_{n-1}$ and the
lowest weight is $\varepsilon_2 + 2 \varepsilon_3 = -\omega_1 -
\omega_{n-1}$.  All weight spaces are one-dimensional except for the
$\varepsilon_1 + \varepsilon_2 + \varepsilon_3=0$ weight space
$\mathfrak{h}$, which is two dimensional.  This corresponds to the
fact that there are two tableaux (in the center of the figure) of
this weight.

We now recall from
\lecturecite{Section~\ref{J:se14}}{\cite[Section~4]{Kamnitzer}}
Ginzburg's construction of irreducible representations of
$\mathfrak{sl}_n$ (or $\mathfrak{gl}_n$) via Springer fibers. For
comparison purposes, let us consider the same example.  That is, let
$\g = \mathfrak{sl}_3$ and let $V = V(\omega_1 + \omega_2)$ be the
adjoint representation as above. We set $w=\omega_1 + \omega_2$ and
$\lambda=\lambda(w)=(2,1)$.  Recall that to construct the
representation $V(w_1 \omega_1 + \dots + w_{n-1} \omega_{n-1})$ of
$\mathfrak{sl}_n$, we fix a nilpotent endomorphism with $w_i$ Jordan
blocks of size $i$.  So we fix a nilpotent $X \in \End(\C^3)$ with
Jordan blocks of size 2 and 1.

Choose the standard basis $\{e_1,e_2,e_3\}$ of $\C^3$ and take
\[
  X = \begin{pmatrix} 0&1&0 \\ 0&0&0 \\ 0&0&0 \end{pmatrix}
\]
in this basis.  Then the variety we are interested in is
\[
  \Fl_3(\C^3)^X = \{0 = V_0 \subseteq V_1 \subseteq V_2 \subseteq
  V_3 = \C^3\ |\ X(V_i) \subseteq V_{i-1},\ 1 \le i \le 3\},
\]
and we have the decomposition
\[
  \Fl_3(\C^3)^X = \bigsqcup_{\mu \in \N^3,\, \mu_1+\mu_2+\mu_3=3}
  \Fl_\mu (\C^3)^X,
\]
where $\Fl_\mu (\C^3)^X$ consists of the flags with $\dim
V_i/V_{i-1} = \mu_i$.

Let $0 = V_0 \subseteq V_1 \subseteq V_2 \subseteq V_3 = \C^3$ be a
flag in $\Fl_\mu(\C^3)^X$.  Note that $X(V_3) = X(\C^3) = \langle
e_1 \rangle$ and thus we must have $\langle e_1 \rangle \subseteq
V_2$. Furthermore $\ker X = X^{-1}(0) = \langle e_1, e_3 \rangle$
and so $V_1 \subseteq \langle e_1, e_3 \rangle$.  Now suppose that
$\mu=(1,1,1)$.  Thus $\dim V_i =i$ for $i=0,1,2,3$.  We see that
$V_1$ can be any one-dimensional subspace of $\langle e_1, e_3
\rangle$ and thus the choice of $V_1$ yields the projective line
$\mathbb{P}^1$.  Now, for a fixed $V_1$, consider the possibilities
for $V_2$.  We must have
\[
  V_1 + \langle e_1 \rangle = V_1 + X(V_3) \subseteq V_2 \subseteq
  X^{-1}(V_1).
\]
Thus, if $V_1=\langle e_1 \rangle$, the only conditions on $V_2$ are
\[
  \langle e_1 \rangle \subseteq V_2 \subseteq \C^3,\quad \dim V_2 =
  2,
\]
and so the choice of $V_2$ yields a projective line $\mathbb{P}^1$.
On the other hand, if $V_1 \ne \langle e_1 \rangle$, then we must
have $V_2 = \langle e_1 \rangle + V_1 = \langle e_1, e_3 \rangle$.
Therefore, $\Fl_{(1,1,1)}(\C^3)^X$ consists of two $\mathbb{P}^1$'s
meeting at a point.

\begin{exercise} \label{S:ex:QV-points}
For all $\mu \in \N^3$ with $\mu_1 + \mu_2 + \mu_3=3$ and $\mu \ne
(1,1,1)$, show that the variety $\Fl_\mu(\C^3)$ is either empty or
is a point.
\end{exercise}

Recall that $H_{\mathrm{top}}(\Fl_3(\C^3)^X) \cong V(\lambda)$ and
that the decomposition $H_{\mathrm{top}}(\Fl_3(\C^3)^X) =
\bigoplus_\mu H_{\mathrm{top}}(\Fl_\mu(\C^3)^X)$ corresponds to the
decomposition $V(\lambda) = \bigoplus_\mu V(\lambda)_\mu$.
Furthermore, a basis of $H_{\mathrm{top}}(\Fl_\mu(\C^3)^X)$ is given
by the fundamental classes of the irreducible components of
$\Fl_\mu(\C^3)^X$.  Using the above and
Exercise~\ref{S:ex:QV-points}, if we draw $\Fl_3(\C^3)^X$ we obtain
Figure~\ref{S:fig:flag-variety}.

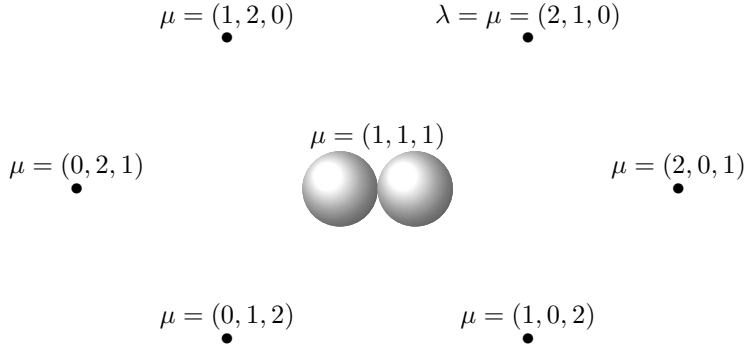
\begin{figure}
\begin{center}
\begin{tikzpicture}
  \draw (-2,2) node[above] {$\mu=(1,2,0)$} node {$\bullet$};
  \draw (2,2) node[above] {$\lambda=\mu=(2,1,0)$} node {$\bullet$};
  \draw (-4,-0) node[above] {$\mu=(0,2,1)$} node {$\bullet$};
  \draw (4,0) node[above] {$\mu=(2,0,1)$} node {$\bullet$};
  \draw (-2,-2) node[above] {$\mu=(0,1,2)$} node {$\bullet$};
  \draw (2,-2) node[above] {$\mu=(1,0,2)$} node {$\bullet$};
  \shade[ball color=white] (-0.5,0) circle (0.5);
  \shade[ball color=white] (0.5,0) circle (0.5);
  \draw (0,0.7) node {$\mu=(1,1,1)$};
\end{tikzpicture}
\bigskip
\caption{$\Fl_3(\C^3)^X$ where $X$ has Jordan type $(2,1)$.}
\label{S:fig:flag-variety}
\end{center}
\end{figure}

Note the similarity between Figures~\ref{S:fig:crystal-graph}
and~\ref{S:fig:flag-variety}.  Our goal is to form a crystal graph
whose vertex set is the set of irreducible components of certain
varieties (such as the Springer fibers discussed above) and whose
crystal operators are defined geometrically.  If we do this, we
should be able to say which of the two $\mathbb{P}^1$'s in
Figure~\ref{S:fig:flag-variety} correspond to which of the weight
zero tableaux in the center of Figure~\ref{S:fig:crystal-graph}.

Let's look at another example.  Consider $\g=\mathfrak{sl}_n$ and
the representation $V=V(N \omega_1)$.  This corresponds to $X=0$.
Then
\[
  \Fl_\mu(\C^N)^X = \Fl_\mu(\C^N) = \{ 0 = V_0
  \subseteq V_1 \subseteq \dots \subseteq V_n = \C^N\ |\ \dim
  V_i/V_{i-1} = \mu_i\}
\]
is irreducible for all $\mu$.  This corresponds to the fact that all
weight spaces of $V$ are one-dimensional.  Therefore the set of
irreducible components of $\Fl_n(\C^N)$ is precisely
\[
  \{ \Fl_\mu(\C^N)\ |\ \mu \in \N^n,\ \mu_1 + \dots + \mu_n = N\}.
\]
We wish to construct crystal operators $\tilde e_i$ and $\tilde f_i$
in a ``natural'' geometric way.  For $i=1,\dots,n-1$, define
\[
  \mu \pm \alpha_i = (\mu_1, \dots, \mu_{i-1}, \mu_i \pm 1,
  \mu_{i+1} \mp 1, \mu_{i+1}, \dots, \mu_n).
\]
Define
\[
  \Fl_{\mu, \mu+\alpha_i} (\C^N) = \{((U_j), (V_j)) \in
  \Fl_\mu(\C^N) \times \Fl_{\mu+\alpha_i}(\C^N)\ |\ U_j \subseteq
  V_j \ \forall\ j\}.
\]
Note that the condition $U_j \subseteq V_j$ implies that $U_j = V_j$
for all $j \ne i$.  Then we have natural projections
\[
  \Fl_\mu(\C^N) \xleftarrow{\pi_1} \Fl_{\mu,\mu+\alpha_i}(\C^N)
  \xrightarrow{\pi_2} \Fl_{\mu + \alpha_i} (\C^N)
\]
given by $\pi((U_j),(V_j)) = (U_j)$ and $\pi_2((U_j),(V_j)) =
(V_j)$.

What are the fibers of the maps $\pi_1$ and $\pi_2$?  Fix $(U_j) \in
\Fl_\mu(\C^N)$.  Then
\begin{align*}
  \pi_1^{-1}((U_j)) &\cong \{V_i\ |\ U_i \subseteq V_i \subseteq
  U_{i+1},\ \dim V_i/U_i = 1\} \\
  &\cong \{\bar V \subseteq U_{i+1}/U_i\ |\ \dim \bar V = 1\} \\
  &\cong \mathbb{P}^{\mu_{i+1}-1}.
\end{align*}
So $\pi_1$ is a fiber bundle with smooth fibers isomorphic to
$\mathbb{P}^{\mu_{i+1}-1}$.

\begin{exercise} \label{S:ex:fiber-bundle}
Show that $\pi_2$ is a fiber bundle with smooth fibers isomorphic to
$\mathbb{P}^{\mu_i-1}$.
\end{exercise}

Since $\pi_1$ and $\pi_2$ are both fiber bundles with smooth fibers,
they induce the following bijections (provided $\mu + \alpha_i \in
\N^n$).

\medskip
\begin{center}
\begin{tabular}{rcccl}
  \parbox{3.3cm}{\begin{center} Irreducible components of $\Fl_\mu
  (\C^N)$ \end{center}} &
  $\stackrel{\text{1-1}}{\longleftrightarrow}$ &
  \parbox{3.3cm}{\begin{center} Irreducible components of
  $\Fl_{\mu,\mu+\alpha_i} (\C^N)$ \end{center}} &
  $\stackrel{\text{1-1}}{\longleftrightarrow}$ &
  \parbox{3.3cm}{\begin{center} Irreducible components of $\Fl_{\mu +
  \alpha_i} (\C^N)$
  \end{center}}
\end{tabular}
\end{center}
\medskip

We can then use these bijections to define the actions of the
crystal operators $\tilde e_i$ and $\tilde f_i$, $i=1, \dots, n-1$.
Namely, $\tilde e_i$ sends an irreducible component of
$\Fl_\mu(\C^N)$ to the corresponding irreducible component of
$\Fl_{\mu+\alpha_i} (\C^N)$ and $\tilde f_i$ does the opposite.

This example will motivate the more general construction in
Section~\ref{S:sec:Nak-QV}.  However, we will need to do some extra
work because the above example was a bit too simple.  In particular,
the weight spaces were all one-dimensional and so the varieties
involved were all irreducible.  We will need to deal with
representations whose weight spaces have higher dimension and thus
whose corresponding varieties are not irreducible.  This will
require us to develop slightly more sophisticated operators.

%
\section{Quivers} \label{S:sec:quivers}
%

The varieties we will use in our geometric construction of crystal
bases are certain varieties attached to quivers.  In this section we
review some of the basic theory of quivers and their
representations.

A \emph{quiver} is simply another name for a directed graph.  Thus,
a quiver is a quadruple $Q = (Q_0,Q_1,s,t)$ where $Q_0$ and $Q_1$
are sets and $s$ and $t$ are maps from $Q_1$ to $Q_0$.  We call
$Q_0$ and $Q_1$ the sets of \emph{vertices} and \emph{directed
edges} (or \emph{arrows}) respectively.  The maps $s$ and $t$ tell
us the endpoints of each arrow: for an arrow $a \in Q_1$, we call
$s(a)$ the \emph{source} of $a$ and $t(a)$ the \emph{target} of $a$.
Usually we will write $Q=(Q_0,Q_1)$, leaving the maps $s$ and $t$
implied. The quiver $Q$ is said to be \emph{finite} if $Q_0$ and
$Q_1$ are both finite.  A \emph{loop} is an arrow $a$ with
$s(a)=t(a)$. In this paper, all quivers will be assumed to be finite
and without loops.  If we forget the orientation of the edges in a
quiver, we obtain a graph, called the \emph{underlying graph} of the
quiver.  A quiver is said to be of \emph{finite type} its underlying
graph is a Dynkin diagram of finite $ADE$ type. Similarly, it is of
\emph{affine} (or \emph{tame}) \emph{type} if the underlying graph
is a Dynkin diagram of affine type and of \emph{indefinite} (or
\emph{wild}) \emph{type} if the underlying graph is a Dynkin diagram
of indefinite type.

A \emph{path} in $Q$ is a sequence of arrows lining up tip-to-tail.
More precisely, a path is a sequence $\beta = a_l a_{l-1} \cdots
a_1$ of arrows such that $t(a_i) = s(a_{i+1})$ for $1 \le i \le
l-1$.  We call $l$ the \emph{length} of the path. We let $s(\beta) =
s(a_1)$ and $t(\beta) = t(a_l)$ denote the initial and final
vertices of the path $\beta$. For each vertex $i \in I$, we have a
trivial path $e_i$ with $s(e_i) = t(e_i)=i$.

The \emph{path algebra} $\C Q$ associated to a quiver $Q$ is the
$\C$-algebra whose underlying vector space has basis the set of
paths in $Q$, and with the product of paths given by concatenation.
More precisely, if $\beta=a_l \cdots a_1$ and $\beta' = b_m \cdots
b_1$ are two paths in $Q$, then $\beta \beta'=a_l \cdots a_1 b_m
\cdots b_1$ if $t(\beta')=s(\beta)$ and $\beta \beta'=0$ otherwise.
This is an associative multiplication. There is a natural grading
$\C Q = \bigoplus_{n \in \N} (\C Q)_n$ where $(\C Q)_n$ is the span
of the paths of length $n$.  This is a grading as an algebra in the
sense that $(\C Q)_n (\C Q)_m \subseteq (\C Q)_{n + m}$ (in fact, we
have equality here).

\begin{example} \label{S:eg:Jordan}
Let $Q$ be the \emph{Jordan quiver}:

\medskip
\begin{center}
\begin{tikzpicture}
  \draw[->] (0,0) node {$\bullet$};
  \draw[->] (0,0) arc (-90:90:0.5);
  \draw (0,1) arc (90:270:0.5);
\end{tikzpicture}
\end{center}

Then $\C Q = \C[t]$, the polynomial algebra in one variable.  The
variable $t$ corresponds to the unique path of length one.
\end{example}

\begin{exercise} \label{S:ex:An}
Let $Q$ be the following \emph{quiver of type $A_n$}:

\medskip
\begin{center}
\begin{tikzpicture}
  \draw[-<] (0,0) node {$\bullet$} node[below] {1} -- (0.5,0);
  \draw (0.5,0) -- (1,0);
  \draw[-<] (1,0) node {$\bullet$} node[below] {2} -- (1.5,0);
  \draw (1.5,0) -- (2,0);
  \draw[-<] (2,0) node {$\bullet$} node[below] {3} -- (2.5,0);
  \draw (2.5,0) -- (3,0) node {$\bullet$} node[below] {4};
  \draw (3,0) -- (3.5,0) node[right] {$\cdots$};
  \draw (4.5,0) -- (5,0);
  \draw[-<] (5,0) node {$\bullet$} -- (5.5,0);
  \draw (5.5,0) -- (6,0);
  \draw[-<] (6,0) node {$\bullet$} -- (6.5,0);
  \draw (6.5,0) -- (7,0) node {$\bullet$} node[below] {$n$};
\end{tikzpicture}
\end{center}
\medskip

Then for all $1 \le i \le j \le n$, there exists a unique path
$p_{ij}$ from $j$ to $i$.  Define a map $f : \C Q \to M_{n \times
n}(\C)$, the algebra of $n \times n$ matrices with complex entries,
by $p_{ij} \mapsto E_{ij}$ and extending by linearity. Show that $f$
is an isomorphism (of algebras) onto the algebra of upper triangular
matrices.
\end{exercise}

A \emph{representation} $(V,x)$ of a quiver $Q$ is a collection
\[
  \{V_i\ |\ i \in Q_0\}
\]
of finite-dimensional vector spaces, together with a collection
\[
  \{x_a : V_{s(a)} \to V_{t(a)}\ |\ a \in Q_1\}
\]
of linear maps.  Representations of $Q$ are equivalent to
representations of the path algebra $\C Q$.

We will often view the collection $\{V_i \ |\ i \in Q_0\}$ as an
$Q_0$-graded vector space $V=\bigoplus_{i \in Q_0} V_i$.  Then the
\emph{graded dimension} of $V$ is
\[
  \bdim V = (\dim V_i)_{i \in Q_0}.
\]

A morphism $(V,x) \to (W,y)$ between two representations of a quiver
$Q$ is a collection
\[
  \{\psi_i : V_i \to W_i\ |\ i \in Q_0\}
\]
of linear maps such that the diagram
\[
  \xymatrix{
    V_{s(a)} \ar[r]^{x_a} \ar[d]_{\psi_{s(a)}} & V_{t(a)}
    \ar[d]^{\psi_{t(a)}} \\
    W_{s(a)} \ar[r]^{y_a} & W_{t(a)}
  }
\]
commutes for all $a \in Q_1$.

Quivers and path algebras play an important role in the
representation theory of finite-dimensional associative algebras. In
particular, we have the following result.

\begin{proposition} \label{S:prop:fd-alg}
Let $A$ be a finite-dimensional $\C$-algebra.  Then the category of
representations of $A$ is equivalent to the category of
representations of $\C Q/I$ for some quiver $Q$ and some two-sided
ideal $I$ of $\C Q$.
\end{proposition}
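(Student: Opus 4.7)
The plan is to prove this by the classical Gabriel-style construction: reduce to a basic algebra via Morita equivalence, read off the quiver from the Jacobson radical, and exhibit a surjection from the path algebra whose kernel is the desired ideal $I$.

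First I would reduce to the case of a basic algebra. Since Morita-equivalent algebras have equivalent module categories, and every finite-dimensional algebra over an algebraically closed field is Morita equivalent to a basic one, it suffices to prove the statement for a basic $A$, i.e., one for which $A/\operatorname{rad}(A) \cong \C \times \cdots \times \C$. Fix a complete set of primitive orthogonal idempotents $e_1,\dots,e_n$ with $\sum_i e_i = 1$; these are in bijection with the isomorphism classes of simple $A$-modules. Let $J = \operatorname{rad}(A)$. By the Wedderburn-Malcev theorem, we may choose a semisimple subalgebra $S \subseteq A$ with $A = S \oplus J$ as vector spaces, where $S = \bigoplus_i \C e_i$.

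Next I would construct the quiver $Q$ intrinsically from $A$. Set $Q_0 = \{1,\dots,n\}$, and declare the number of arrows from $i$ to $j$ to be $\dim_\C e_j (J/J^2) e_i$. For each such arrow $a : i \to j$, choose a lift $x_a \in e_j J e_i$ whose class modulo $J^2$ is the corresponding basis vector. Define
\[
  \phi : \C Q \to A, \qquad e_i \mapsto e_i,\quad a \mapsto x_a,
\]
extending multiplicatively on paths. The key claim is that $\phi$ is surjective. This follows from Nakayama's lemma: by construction, the elements $\{x_a\}$ together with the $e_i$'s span $S \oplus J/J^2$, hence generate $A$ modulo $J^2$; iterating (or applying Nakayama to the nilpotent ideal $J$) shows they generate $A$ as an algebra. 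Setting $I = \ker \phi$ then yields an isomorphism $A \cong \C Q/I$, and since equivalent algebras (in fact isomorphic here) have equivalent representation categories, the proposition follows.

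The main obstacle is the surjectivity argument, which rests on two non-trivial inputs: the existence of the Wedderburn-Malcev lifting of $A/J$ into $A$ (used to place the primitive idempotents inside $A$ in a compatible way with $S$) and the Nakayama-type induction that lets one pass from generating $J/J^2$ to generating $J$ itself, using that $J$ is nilpotent. The Morita reduction to the basic case is standard but should be stated carefully, since the equivalence of categories in the conclusion is exactly what allows us to replace $A$ by $eAe$ without loss of generality. I would omit the routine verification that $\phi$ is well defined and that morphisms correspond, and simply cite a reference (for instance Assem-Simson-Skowro{\'n}ski) for the full details, since the emphasis of this section is expository.
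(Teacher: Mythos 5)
Your argument is correct and is precisely the standard proof that the paper defers to by citation (it is essentially \cite[Theorem~3.7]{ASS06}): Morita reduction to a basic algebra, construction of the Gabriel quiver from $e_j(\operatorname{rad}A/\operatorname{rad}^2 A)e_i$, and surjectivity of the induced map $\C Q \to A$ via nilpotence of the radical. The only caveat worth noting is that the quiver produced this way may have loops (e.g.\ for $A=\C[x]/(x^2)$), which sits slightly at odds with the paper's standing convention that all quivers are loop-free, but this is an issue with the statement's context rather than with your proof.
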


\begin{proof}
This follows from results in \cite{Gab72,Gab71}.  The full proof can
also be found in \cite[Theorem~3.7]{ASS06}.
\end{proof}

Suppose $(V,x)$ and $(W,y)$ are representations of a quiver $Q$.
Then we define the \emph{direct sum} of $(V,x)$ and $(W,y)$ to be
the representation $(V \oplus W, x \oplus y)$ where $(V \oplus W)_i
= V_i \oplus W_i$ for $i \in Q_0$, and $(x \oplus y)_a = x_a \oplus
y_a$ for $a \in Q_1$.

If $(V,x)$ is a representation of a quiver $Q$ and $W$ is an
$Q_0$-graded subspace of $V$ (that is, $W_i \subseteq V_i$ for all
$i \in Q_0$) then we say that $W$ is \emph{$x$-invariant} if
$x_a(W_{s(a)}) \subseteq W_{t(a)}$ for all $a \in Q_1$.  Then $(W,
x|_W)$ is called a \emph{subrepresentation} of $(V,x)$.  The
representation $(0,0)$ is called the \emph{trivial representation}.

A quiver representation $Q$ is said to be \emph{irreducible} (or
\emph{simple}) if it contains no nontrivial proper
subrepresentations. It is called \emph{indecomposable} if cannot be
written as a direct sum of two nontrivial subrepresentations.

\begin{example} \label{S:eg:simples}
Fix a vertex $i \in Q_1$ and let $S^i$ be a $Q_0$-graded vector
space with $S^i_i=\C$ and $S^i_j = 0$ for $i \ne j$.  Then $(S^i,0)$
is a simple representation.
\end{example}

\begin{example} \label{S:eg:An-indecomps}
Let $Q$ be the quiver of Exercise~\ref{S:ex:An}.  For $1 \le i \le j
\le n$, define $V^{i,j}$ to be the $Q_0$-graded vector space defined
by
\[
  V^{i,j}_k = \begin{cases}
    \C & i \le k \le j, \\
    0 & \text{otherwise}.
  \end{cases}
\]
For $a \in Q_1$, define
\[
  x^{i,j}_a = \begin{cases}
    \id & i+1 \le s(a) \le j, \\
    0 & \text{otherwise}.
  \end{cases}
\]
Then $(V^{i,j},x^{i,j})$ is indecomposable.  However, if $i < j$,
then $(V^{i,j},x^{i,j})$ is not irreducible.
\end{example}

Forgetting the orientation of the edges of a quiver $Q$, we obtain
the \emph{underlying graph} $\Gamma(Q)$ of $Q$.  We then let $\g(Q)$
be the Kac-Moody algebra whose Dynkin graph is the graph
$\Gamma(Q)$. Note that such a Kac-Moody algebra has symmetric Cartan
matrix and any Kac-Moody algebra with symmetric Cartan matrix arises
as $\g(Q)$ for some (in general not unique) quiver $Q$.

\begin{theorem}[Gabriel's Theorem \cite{BGP73,Gab72}] \label{S:thm:Gabriel}
A quiver $Q$ has finitely many indecomposable representations (up to
isomorphism) if and only if $Q$ is of finite type.  Furthermore, if
$Q$ is of finite type, there is a one-to-one correspondence between
isomorphism classes of indecomposable representations of $Q$ and
positive roots of $\g(Q)$.  This correspondence is given by
\[
  (V,x) \mapsto \sum_{i \in Q_0} (\dim V_i) \alpha_i,
\]
where the $\alpha_i$, $i \in Q_0$, are the simple roots of $\g(Q)$.
\end{theorem}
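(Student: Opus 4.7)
The plan is to prove both directions by exploiting the \emph{Tits (or Euler) form}
\[
  q(\mathbf{d}) = \sum_{i \in Q_0} d_i^2 - \sum_{a \in Q_1} d_{s(a)} d_{t(a)}
\]
associated to $\Gamma(Q)$. This form coincides with the one attached to the symmetric Cartan matrix of $\g(Q)$, and the classification of Dynkin diagrams shows that $q$ is positive definite precisely when $\Gamma(Q)$ is of finite $ADE$ type, positive semidefinite with one-dimensional radical spanned by the null root $\delta$ in affine type, and indefinite otherwise. In parallel, I would introduce the \emph{representation variety} $\mathrm{Rep}(Q,\mathbf{d}) = \bigoplus_{a \in Q_1} \Hom(\C^{d_{s(a)}}, \C^{d_{t(a)}})$ on which $G_\mathbf{d} = \prod_i GL_{d_i}(\C)$ acts by base change, so that $G_\mathbf{d}$-orbits are exactly isomorphism classes of representations with $\bdim V = \mathbf{d}$. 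The elementary identity $\dim \mathrm{Rep}(Q,\mathbf{d}) - \dim G_\mathbf{d} = -q(\mathbf{d})$ together with orbit-stabilizer yields, for any $(V,x)$ with $\bdim V = \mathbf{d}$,
\[
  q(\mathbf{d}) = \dim \End(V,x) - \dim \operatorname{Ext}^1(V,x).
\]

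For the direction ``finite type $\Rightarrow$ finitely many indecomposables in bijection with $\Phi^+$'', I would invoke the \emph{Bernstein--Gelfand--Ponomarev reflection functors} $S_i^\pm$ attached to sinks and sources $i$ of $Q$. The key properties are that they send indecomposables (other than the simple $S^i$) to indecomposables on a reoriented quiver and transform graded dimension vectors by the simple reflection $s_i$ of the Weyl group of $\g(Q)$. Iterating along a reduced expression for a Coxeter element and inducting on the length of the resulting Weyl group element, one shows that every indecomposable $(V,x)$ can be transported by a sequence of such functors to some simple $S^j$; hence $\bdim V$ is a non-negative integer vector in the Weyl orbit of some $\alpha_j$, and in $ADE$ type such vectors are exactly the elements of $\Phi^+$. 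This gives surjectivity of $(V,x) \mapsto \bdim V$. For injectivity, any positive root $\alpha$ satisfies $q(\alpha) = 1$, so by the formula above any indecomposable $(V,x)$ with $\bdim V = \alpha$ has $\dim \End(V,x) = 1$ and $\operatorname{Ext}^1(V,x) = 0$ (using that $\End$ of an indecomposable is local and hence at least one-dimensional), so its $G_\mathbf{d}$-orbit is open dense in $\mathrm{Rep}(Q,\alpha)$ and such an indecomposable is unique up to isomorphism.

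For the converse, if $\Gamma(Q)$ is not of type $ADE$ then $q$ fails to be positive definite and there exists a nonzero $\mathbf{d} \in \N^{Q_0}$ with $q(\mathbf{d}) \le 0$, so $\dim \mathrm{Rep}(Q,\mathbf{d}) \ge \dim G_\mathbf{d}$; since the diagonal $\C^\times \subset G_\mathbf{d}$ acts trivially, every orbit has dimension at most $\dim G_\mathbf{d} - 1$, and a dimension count in $\mathrm{Rep}(Q,\mathbf{d})$ forces infinitely many orbits. A Krull--Schmidt argument, combined with the observation that at most boundedly many indecomposable summands can appear in any representation of graded dimension $\mathbf{d}$, upgrades this to infinitely many non-isomorphic indecomposables. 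The main obstacle in the whole scheme is the detailed verification of the three defining properties of the BGP functors---exactness on the correct subcategory, preservation of indecomposability, and the simple-reflection rule for dimension vectors; once these are in hand, the rest of the argument reduces to Weyl group combinatorics and the two dimension counts above.
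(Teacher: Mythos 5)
Your outline follows the second of the two proofs the paper cites (the reflection-functor and Weyl-group argument of Bern{\v{s}}te{\u\i}n--Gel$'$fand--Ponomarev), supplemented by the Tits-form and orbit-dimension counts found in more modern treatments; since the paper's own ``proof'' is only a pointer to the literature, there is no line-by-line comparison to make. The converse direction is sound: to produce a nonzero $\mathbf{d} \in \N^{Q_0}$ with $q(\mathbf{d}) \le 0$ you do need the standard fact that every connected non-Dynkin graph contains an extended Dynkin subgraph (take its null root, extended by zero), but the dimension count and the Krull--Schmidt upgrade are correct as stated. The identity $q(\mathbf{d}) = \dim \End(V,x) - \dim \operatorname{Ext}^1(V,x)$ is also correct, though deducing it from orbit--stabilizer silently uses that the normal space to the orbit at $x$ is $\operatorname{Ext}^1(V,x)$, which is where the hereditariness of $\C Q$ enters.

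The one genuine gap is in your uniqueness step. From $q(\alpha) = \dim \End(V,x) - \dim \operatorname{Ext}^1(V,x) = 1$ together with $\dim \End(V,x) \ge 1$ you can only conclude $\dim \operatorname{Ext}^1(V,x) = \dim \End(V,x) - 1 \ge 0$; nothing in that inequality forces $\dim \End(V,x) = 1$, so the parenthetical justification does not establish what is claimed. To close this you need the separate fact that when $q$ is positive definite every indecomposable is a brick --- for instance via the Happel--Ringel lemma, which says an indecomposable that is not a brick contains a brick $W$ with $\operatorname{Ext}^1(W,W) \ne 0$, whence $q(\bdim W) = 1 - \dim \operatorname{Ext}^1(W,W) \le 0$, contradicting positive definiteness. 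Alternatively, uniqueness comes for free from the machinery you already set up: the indecomposable of dimension vector $s_{i_1} \cdots s_{i_k} \alpha_j$ is obtained from the simple $S^j$ by applying the inverse reflection functors, and these are mutually inverse equivalences on the subcategories of representations with no summand isomorphic to the relevant $S^i$, so the indecomposable with a given dimension vector is unique up to isomorphism with no dimension count at all. Either repair is standard, but as written the step does not follow.
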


\begin{proof}
This result was first proved by Gabriel in \cite{Gab72}.  The proof
is computational and the relation with Dynkin diagrams is a
consequence of this proof and not an important feature of the
calculations.  A second proof was given by Bern{\v{s}}te{\u\i}n,
Gel$'$fand, and Ponomarev in \cite{BGP73} that involves the
machinery of Coxeter functors and Weyl groups.
\end{proof}

\begin{exercise}[Type $A_n$] \label{S:ex:indecomps-An}
Let $Q$ be the quiver of type $A_n$ given in Exercise~\ref{S:ex:An}.
Then $\g(Q)=\mathfrak{sl}_{n+1}$.  Show that the representations
$(V^{i,j},x^{i,j})$ are the only indecomposable representations of
$Q$ up to isomorphism.  The indecomposable representation $(V^{i,j},
x^{i,j})$ corresponds to the positive root
\[
  \alpha_i + \alpha_{i+1} + \dots + \alpha_j
\]
which is the weight of the root space $\C E_{i,j+1}$ of
$\mathfrak{sl}_{n+1}$, where $E_{i,j+1}$ is the elementary matrix
with a one in entry $(i,j+1)$ and a zero in all other entries.
\end{exercise}

\begin{example}[Type $A_n^{(1)}$] \label{S:eg:affine-An-quiver}
Let $Q$ be the quiver
\begin{center}
\begin{tikzpicture}
  \draw[->] (0,0) node {$\bullet$} node[below] {1} -- (0.5,0);
  \draw (0.5,0) -- (1,0);
  \draw[-<] (1,0) node {$\bullet$} node[below] {2} -- (1.5,0);
  \draw (1.5,0) -- (2,0);
  \draw[-<] (2,0) node {$\bullet$} node[below] {3} -- (2.5,0);
  \draw (2.5,0) -- (3,0) node {$\bullet$} node[below] {4};
  \draw (3,0) -- (3.5,0) node[right] {$\cdots$};
  \draw (4.5,0) -- (5,0);
  \draw[-<] (5,0) node {$\bullet$} -- (5.5,0);
  \draw (5.5,0) -- (6,0);
  \draw[-<] (6,0) node {$\bullet$} -- (6.5,0);
  \draw (6.5,0) -- (7,0) node {$\bullet$} node[below] {$n$};
  \draw[-<] (7,0) -- (5.25,1);
  \draw (5.25,1) -- (3.5,2) node {$\bullet$} node[above] {0};
  \draw[-<] (3.5,2) -- (1.75,1);
  \draw (1.75,1) -- (0,0);
\end{tikzpicture}
\end{center}
and consider the positive imaginary root $\delta = \sum_{i=0}^n
\alpha_i$.  Then there exists a one parameter family of
non-isomorphic representations where $\dim V_i =1$ for
$i=0,\dots,n$, and all $x_a$ are non-zero.  The parameter is the
composition around the loop.
\end{example}

\begin{theorem}[Kac's Theorem] \label{S:thm:Kac}
Let $Q$ be an arbitrary quiver.  Then the graded dimensions of
indecomposable representations of $Q$ correspond, via the map
\[
  \bdim V \mapsto \sum_{i \in Q_0} (\dim V_i) \alpha_i,
\]
to the positive roots of the root system of $\g(Q$).
\end{theorem}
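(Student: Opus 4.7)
The plan is to prove Kac's theorem in three stages, building on Gabriel's theorem (Theorem~\ref{S:thm:Gabriel}) as the finite-type base case and extending to an arbitrary quiver by means of reflection functors and the Tits quadratic form.

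First, I would introduce the Euler bilinear form on the root lattice, defined representation-theoretically by Ringel's formula $\langle \bdim V, \bdim W \rangle = \dim \Hom(V,W) - \dim \operatorname{Ext}^1(V,W)$, and observe that its symmetrization coincides with the standard invariant form on the root lattice of $\g(Q)$. The associated Tits quadratic form $q$ then detects root-hood: positive elements of the lattice with connected support satisfying $q \leq 1$ are positive roots, with $q=1$ characterizing the real roots and $q\leq 0$ the imaginary ones.

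Next, I would introduce the Bernstein--Gel'fand--Ponomarev reflection functors $\Sigma_i^\pm$ at sinks and sources of $Q$. These provide equivalences between torsion classes of representations of $Q$ and of the quiver obtained by flipping the orientation at $i$, and on dimension vectors they realize the simple Weyl reflections $s_i$. Since every real positive root lies in the Weyl orbit of some simple root, the real case reduces to the simple root $\alpha_i$, which is obviously the dimension vector of the unique indecomposable $S^i$ of Example~\ref{S:eg:simples}; existence and uniqueness at higher real roots are transported back along a suitable composition of $\Sigma_i^\pm$.

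The hardest step is the imaginary root case. The converse direction is comparatively clean: if $(V,x)$ is indecomposable, Fitting's lemma forces $\End(V,x)$ to be a local finite-dimensional $\C$-algebra with residue field $\C$, so Ringel's formula gives $q(\bdim V) \leq 1$; combining this with a connected-support argument and the characterization of roots above places $\bdim V$ among the positive roots. The existence direction --- that every positive imaginary root is the dimension vector of some indecomposable --- is where I expect the main technical obstacle. After using reflection functors to move the root into the fundamental domain of the Weyl group on the positive cone, one must exhibit an indecomposable (in fact a positive-dimensional family) of the prescribed graded dimension. Kac's original argument reduces the question to quivers over $\mathbb{F}_q$ and exploits the fact that the number of absolutely indecomposable representations of fixed dimension vector is polynomial in $q$, then invokes Lang--Weil type estimates to transfer the conclusion back to $\C$; alternatively, one can argue directly by comparing the dimension of the representation variety to that of the gauge group acting on it and deducing the existence of a generic indecomposable. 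Either way, the polynomial-counting or dimension-counting input is the delicate point.
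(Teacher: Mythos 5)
The paper itself gives no proof of Kac's theorem; it is stated without argument in these lecture notes (the references in Remarks~\ref{S:rem:Kac} point to the literature), so there is no in-text proof to compare your route against. Your overall architecture --- Tits form, BGP reflection functors realizing the simple reflections, reduction of real roots to simple roots, and Kac's finite-field count (or a dimension count in the representation variety) for the positive imaginary roots --- is the standard one, and you correctly identify the existence of indecomposables in imaginary dimension vectors as the technical heart.

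However, the step you describe as ``comparatively clean'' contains a genuine gap. Fitting's lemma makes $\End(V,x)$ local with residue field $\C$, which gives $\dim \End(V,x) = 1 + \dim \operatorname{rad}\End(V,x) \ge 1$; feeding this into Ringel's formula $q(\bdim V) = \dim\Hom(V,V) - \dim\operatorname{Ext}^1(V,V)$ yields a \emph{lower} bound $q(\bdim V) \ge 1 - \dim\operatorname{Ext}^1(V,V)$, not the upper bound $q(\bdim V) \le 1$ you assert. To get $q \le 1$ this way you would need $\dim\operatorname{rad}\End(V,x) \le \dim\operatorname{Ext}^1(V,V)$, which is not a formal consequence of locality (when $\operatorname{Ext}^1(V,V)=0$ this is the nontrivial Happel--Ringel statement that a rigid indecomposable over a hereditary algebra is a brick). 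Moreover, even granted $q(\alpha)\le 1$ and connected support, that condition does not by itself characterize positive roots: real roots are the Weyl orbit of the simple roots and imaginary roots the Weyl orbit of the fundamental region, and matching an arbitrary $\alpha$ with $q(\alpha)\le 1$ to one of these requires the same reflection argument as the existence direction. So the ``indecomposable $\Rightarrow$ root'' direction must also be run through the machine: reflect $\bdim V$ downward at vertices $i$ with $(\bdim V,\alpha_i)>0$ until you either isolate a simple root or land in the fundamental region, using reflection functors to carry indecomposability along. This in turn needs the fact that the set of dimension vectors of indecomposables is independent of orientation (so that each reflection can be performed at a sink or source), which is itself one of the nontrivial inputs Kac supplies via the polynomial count of absolutely indecomposable representations over $\mathbb{F}_q$. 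Your sketch should make explicit that both directions, not just existence for imaginary roots, rely on this reduction.
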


\begin{remarks} \label{S:rem:Kac}
\begin{enumerate}
  \item In Kac's Theorem, no assertion is made that positive roots
  are in one-to-one correspondence with isomorphism classes of
  indecomposable representations.

  \item Real roots correspond to graded dimensions with one
  indecomposable representation of that dimension and imaginary
  roots correspond to graded dimensions with families of
  indecomposable representations of that dimension.
\end{enumerate}
\end{remarks}

Assume for simplicity that $Q$ is of finite type.  Then any
representation can be written in a unique way (up to isomorphism) as
a sum of indecomposable representations labeled by positive roots of
$\g=\g(Q)$.  Let $\Phi$ be the set of roots of $\g$ and $\Phi^\pm$
the set of positive/negative roots.  Recall that $\g$ has a
\emph{triangular decomposition}
\[ \textstyle
  \g = \mathfrak{n}^- \oplus \mathfrak{h} \oplus
  \mathfrak{n}^+,\quad \mathfrak{n}^- = \bigoplus_{\beta \in
  \Phi^-} \g_\beta,\quad \g_\beta = \C f_\beta,\quad \beta \in
  \Phi^-.
\]
We fix an ordering $\beta_1, \dots, \beta_m$ of $\Phi^-$.  Then, by
the PBW Theorem, $U(\mathfrak{n}^-)$ has a basis
\[
  \{f_{\beta_1}^{l_1} f_{\beta_2}^{l_2} \dots f_{\beta_m}^{l_m}\ |\
  l_i \in \N,\ 1 \le i \le m\}.
\]
Thus we have a bijection
\[
  \{\text{Isomorphism classes of representations of $Q$}\}
  \stackrel{\text{1-1}}{\longleftrightarrow} \text{Basis of
  $U(\g^-)$}.
\]

Fix a $Q_0$-graded vector space $V = \bigoplus_{i \in Q_0} V_i$ and
let $\bv = \bdim V$.  Then define $A_\bv$ to be the space of
representations of $Q$ on $V$.  More precisely,
\[
  A_\bv := \{ x = (x_a)_{a \in Q_1}\ |\ x_a : V_{s(a)} \to
  V_{t(a)},\ a \in Q_1\}.
\]
Let $G = G_V = \prod_{i \in Q_0} GL(V_i)$.  Then $G$ acts on $A_\bv$
by
\[
  g \cdot x = (g_i)_{i \in Q_0} \cdot (x_a)_{a \in Q_1}
  = (x'_a)_{a \in Q_1}, \quad x'_a =
  g_{t(a)} x_a g_{s(a)}^{-1}.
\]
In this way, the $G$-orbits of $A_\bv$ are precisely the isomorphism
classes of representations of $Q$ with graded dimension $\bv$.
Therefore, we have a bijection
\[
  \{\text{$G$-orbits of $A_\bv$}\}
  \stackrel{\text{1-1}}{\longleftrightarrow} \text{Basis of
  $U(\mathfrak{n}^-)_{-\sum_i v_i \alpha_i}$},
\]
where $U(\mathfrak{n}^-)_{\beta}$ denotes the $\beta$-weight space
of $U(\mathfrak{n}^-)$.

\begin{example} \label{ex:simple-quiver}
Consider the quiver
\smallskip
\begin{center}
\begin{tikzpicture}
  \draw (0,0) node {$Q$};
  \draw (1,0) node {$\bullet$} node[below] {1} to[-<] (2,0) node[below] {$a$};
  \draw (2,0) to (3,0) node {$\bullet$} node[below] {2};
\end{tikzpicture}
\end{center}
Then $\g(Q) = \mathfrak{sl}_3$.  Let $V$ be the $Q_0$-graded vector
space given by $V_1=\C$ and $V_2=\C$ and let $\bv = \bdim V$.  Then
$A_\bv$ is the space of linear maps from $\C$ to $\C$, which is
isomorphic to $\C$ (since any such map is given by multiplication by
some complex number).  Then $A_\bv \cong \C$ decomposes as a
disjoint union of two orbits: $\C^*$ and $\{0\}$.

\medskip
\begin{center}
\begin{tikzpicture}
  \draw (0,0) to (2,2) to (5,2) to (3,0) to (0,0);
  \draw (2.5,1) node {$\bullet$} node[label=45:0]{};
\end{tikzpicture}
\end{center}
\end{example}

We would like to construct a variety whose irreducible components
are in one-to-one correspondence with the elements of a basis of a
weight space of $U(\mathfrak{n}^-)$.  The variety $A_\bv$ does not
satisfy this property because it has only one irreducible component.
The problem is that its orbits fit together in such a way as to form
a single irreducible component.  We would like each orbit to give
rise to its own irreducible component.  One way of doing this would
be to take the union of the conormal bundles to the orbits in
$A_\bv$.

We recall the definition of the conormal bundle.  The conormal
bundle to a smooth subvariety $S$ of a smooth variety $A$ is the
sub-bundle of the cotangent bundle $T^*A$ whose fiber over any point
$x \in S$ consists of those $\phi \in (T^*A)_x$ such that
$\phi(v)=0$ for all $v \in (TS)_x$ and whose fiber over all other
points is empty. The dimension of the conormal bundle is equal to
the dimension of $A$ (and half the dimension of the cotangent bundle
$T^*A$).  The larger the dimension of the orbit, the smaller the
dimension of the fibers.

We thus see that the union of the conormal bundles to the orbits in
$A_\bv$ has the property we desire.  Namely, its set of irreducible
components is in one-to-one correspondence with a basis of the
$-\sum_i v_i \alpha_i$ weight space of $U(\mathfrak{n}^-)$.  In
particular, the conormal bundle to the orbit $\C^*$ is simply the
orbit itself while the conormal bundle to the orbit $\{0\}$ is a
complex line.  We will see in the next section that this observation
leads naturally to a ``doubling'' of the quiver and the definition
of the Lusztig quiver variety.

%
\section{The Lusztig quiver variety} \label{S:sec:LQV}
%

Let $\g$ be a Kac-Moody algebra with symmetric Cartan matrix.  For
example, $\g$ could be a simple Lie algebra of type $A$, $D$ or $E$.
We let $Q=(Q_0,Q_1)$ be the \emph{double quiver} associated to the
Dynkin graph of $\g$.  That is, $Q_0$ is the set of vertices of this
Dynkin graph and for each (undirected) edge of the Dynkin graph,
$Q_1$ contains two arrows (one in each direction) with the same
endpoints.  For example, if $\g=\mathfrak{sl}_{n+1}$ is the Lie
algebra of type $A_n$, then the corresponding double quiver is as
follows.
\bigskip
\begin{center}
\begin{tikzpicture}
  \draw[->] node[label=270:1] {$\bullet$} (0,0) to[out=45,in=180]
  (0.5,0.3);
  \draw (0.5,0.3) to[out=0,in=135] (1,0) node[label=270:2] {$\bullet$};
  \draw[->] (1,0) to[out=225,in=0] (0.5,-0.3);
  \draw (0.5,-0.3) to[out=180,in=-45] (0,0);
  \draw[->] (1,0) to[out=45,in=180] (1.5,0.3);
  \draw (1.5,0.3) to[out=0,in=135] (2,0) node[label=270:3] {$\bullet$};
  \draw[->] (2,0) to[out=225,in=0] (1.5,-0.3);
  \draw (1.5,-0.3) to[out=180,in=-45] (1,0);
  \draw[->] (2,0) to[out=45,in=180] (2.5,0.3);
  \draw[>-] (2.5,-0.3) to[out=180,in=-45] (2,0);
  \draw (2.75,0) node {$\cdots$};
  \draw[>-] (3,0.3) to[out=0,in=135] (3.5,0) node {$\bullet$};
  \draw[->] (3.5,0) to[out=225,in=0] (3,-0.3);
  \draw[->] (3.5,0) to[out=45,in=180] (4,0.3);
  \draw (4,0.3) to[out=0,in=135] (4.5,0) node[label=270:$n$] {$\bullet$};
  \draw[->] (4.5,0) to[out=225,in=0] (4,-0.3);
  \draw (4,-0.3) to[out=180,in=-45] (3.5,0);
\end{tikzpicture}
\end{center}
\bigskip

We have a natural involution $a \mapsto \bar a$, $a \in Q_1$, which
maps the arrow $a$ to the arrow with the same underlying edge but
with opposite orientation.

An \emph{orientation} of $Q$ is a choice of subset $\Omega \subseteq
Q_1$ such that $\Omega \cup \bar \Omega = Q_1$ and $\Omega \cap \bar
\Omega = \emptyset$.  That is, $\Omega$ contains exactly one arrow
from each pair associated to each edge of the Dynkin graph.

For a $Q_0$-graded vector space $V=\bigoplus_{i \in Q_0} V_i$, let
\[
  E_V = \bigoplus_{a \in Q_1} \Hom (V_{s(a)}, V_{t(a)}).
\]
Then $E_V = E_{V, \Omega} \oplus E_{V, \bar \Omega}$, where
\[
  E_{V,\Omega} = \bigoplus_{a \in \Omega} \Hom (V_{s(a)},
  V_{t(a)}),\quad E_{V,\bar \Omega} = \bigoplus_{a \in \bar \Omega}
  \Hom (V_{s(a)}, V_{t(a)}).
\]
Recall that $G_V = \prod_{i \in Q_0} GL(V_i)$ acts naturally on
$E_V$, $E_{V,\Omega}$ and $E_{V,\bar \Omega}$.

Define
\[
  \epsilon : Q_1 \to \{\pm 1\},\quad \epsilon(a) =
  \begin{cases}
    +1 & a \in \Omega, \\
    -1 & a \in \bar \Omega.
  \end{cases}
\]
We then define a $G_V$-invariant, nondegenerate, symplectic form
$\langle \cdot, \cdot \rangle$ on $E_V$ by
\[
  \langle x,y \rangle = \sum_{a \in Q_1} \epsilon(a) \tr(x_a
  x_{\bar a}).
\]
This pairs $E_{V,\Omega}$ with $E_{V,\bar \Omega}$ and so we can
view $E_{V,\bar \Omega}$ as the dual space $(E_{V,\Omega})^*$ and
$E_V$ as the tangent space $T^* E_{V,\Omega}$ to $E_{V,\Omega}$.
This is simply the observation that $\Hom(V,W)$ is dual to
$\Hom(W,V)$ under the trace.

To any Hamiltonian action of a Lie group on a symplectic manifold,
there is an associated \emph{moment map}.  The $G_V$-action on $E_V$
is such an action and the corresponding moment map is
\[
  \psi : E_V \to \mathfrak{gl}_V = \prod_{i \in Q_0}
  \mathfrak{gl}(V_i) = \prod_{i \in Q_0} \End V_i,
\]
with $i$th component
\[
  \psi_i(x) = \sum_{a \in Q_1,\, t(a)=i} \epsilon(a) x_a x_{\bar
  a}.
\]
Here $\mathfrak{gl}_V$ is the Lie algebra of $G_V$.  Usually the
moment map is a map to the dual of the Lie algebra of the group but
we have identified $\mathfrak{gl}_V$ with its dual via the trace
here.

We say that $x \in E_V$ is \emph{nilpotent} if there exists an $N
\ge 1$ such that for any path $\beta = a_N \cdots a_2 a_1$ of length
$N$, we have that
\[
  x_{a_N} \cdots x_{a_2} x_{a_1} : V_{s(a_1)} \to V_{t(a_N)}
\]
is the zero map.

\begin{definition}[Lusztig quiver variety] \label{S:def:LQV}
For a $Q_0$-graded vector space $V=\bigoplus_{i \in Q_0} V_i$, let
\[
  \Lambda_V := \{x \in E_V\ |\ \psi(x) = 0,\ x \text{ is
  nilpotent}\}.
\]
The variety $\Lambda_V$ is called the \emph{Lusztig quiver variety}
associated to $Q$ and $V$.
\end{definition}

For $\g$ (or $Q$) of arbitrary type, $\Lambda_V$ has the following
properties:
\begin{enumerate}
  \item $\Lambda_V$ is a closed subvariety of $E_V$ of pure
  dimension $\frac{1}{2} \dim E_V$.  That is, each irreducible
  component of $\Lambda_V$ has this dimension.

  \item $\Lambda_V$ is a langrangian subvariety of $E_V$.

  \item If $x_\Omega \in E_{V, \Omega}$ and $x_{\bar \Omega} \in
  E_{V, \bar \Omega}$, then
  \begin{center}
  \begin{tabular}{rcl}
    $\psi(x_\Omega + x_{\bar \Omega})=0$ & $\iff$ & \parbox{7.5cm}{$x_{\bar \Omega}$
    is orthogonal to the tangent space to the $G_V$-orbit through
    $x_\Omega$ (with respect to the form $\langle \cdot, \cdot
    \rangle$).}
  \end{tabular}
  \end{center}
\end{enumerate}

\smallskip

Additionally, if $\g$ is of finite type, then
\begin{enumerate} \setcounter{enumi}{3}
  \item If $x \in E_V$, then $\psi(x)=0$ implies that $x$ is
  nilpotent.  Thus, the nilpotency condition in the definition of
  the Lusztig quiver variety is superfluous.

  \item The irreducible components of $\Lambda_V$ are the closures
  of the conormal bundles of the $G_V$-orbits in $E_{V,\Omega}$.
\end{enumerate}

Our goal is to use Lusztig quiver varieties to construct the crystal
$B(\infty)$.  We want the following relationship between the
geometry and the elements of the crystal.
\bigskip
\begin{center}
\begin{tabular}{cc}
  \toprule
  Crystal & Geometry \\
  \midrule
  Vertex set & Irreducible components of $\bigsqcup_{V} \Lambda_V$ \\
  Vertices of weight $-\sum v_i \alpha_i$ & Irreducible components
  of $\Lambda_V$, $\bdim V = \bv$ \\
  Crystal operators & Natural geometrically defined operators \\
  \bottomrule
\end{tabular}
\end{center}
\bigskip
In the union $\bigsqcup_V \Lambda_V$, we take the union over one
$Q_0$-graded vector space $V$ of each graded dimension.

We now describe this process in more detail.  For $\bv \in
\N^{Q_0}$, set $V^\bv = \bigoplus_{i \in Q_0} \C^{v_i}$ and
$\Lambda(\bv) = \Lambda_{V^\bv}$.  For $i \in Q_0$, let $\be^i \in
\N^{Q_0}$ such that $\be^i_j = \delta_{ij}$ and for $c \in \N$,
define $\tilde \Lambda(\bv,c\be^i)$ to be the variety of triples
$(x,\phi',\bar \phi)$ where $x \in \Lambda(\bv)$ and
$\phi'=(\phi_i')_{i \in I}$, $\bar \phi = (\bar \phi_i)_{i \in I}$
give an exact sequence
\[
  0 \to V^{\bv-c\be^i} \xrightarrow{\phi'} V^\bv \xrightarrow{\bar
  \phi} V^{c\be^i} \to 0
\]
such that $\im \phi'$ is $x$-invariant.  Then $x$ induces $x' \in
\Lambda(\bv-c\be^i)$ by the restriction to $\im \phi'$ and $\bar x
\in \Lambda(c\be^i)=\{0\}$ by passing to the quotient $V^\bv/\im
\phi'$. Note that $x$ is nilpotent if and only if $x'$ is.

Consider the maps
\[
  \Lambda(\bv-c\be^i) \xleftarrow{p_1}
  \tilde \Lambda(\bv,c\be^i) \xrightarrow{p_2} \Lambda(\bv)
\]
where $p_1(x,\phi',\bar \phi) = x'$ and $p_2(x,\phi',\bar \phi)=x$.
We want to use these maps to identify irreducible components.  The
problem is that they are not the right type of maps (that is, fiber
bundles with smooth fibers).  Thus, we need to restrict them.

For $i \in Q_0$, define $\varepsilon_i : \Lambda(\bv) \to \N$ by
\[
  \varepsilon_i(x) = \dim \Coker \left( \bigoplus_{a,\, t(a)=i}
  V_{s(a)} \xrightarrow{(x_a)} V_i \right).
\]
We will see that this map will play the role of the map
$\varepsilon_i$ in the definition of crystals (see
\lecturecite{Section~\ref{K:section_abstract}}{\cite[Section~5]{Kang}}),
hence the notation. For $c \in \N$, define
\[
  \Lambda(\bv)_{i,c} = \{x \in \Lambda(\bv)\ |\ \varepsilon_i(x) =
  c\}.
\]
This is a locally closed subvariety of $\Lambda(\bv)$.

If $\Lambda(\bv)_{i,c} \ne \emptyset$, then
\[
  p_1^{-1}(\Lambda(\bv-c\be^i)_{i,0}) = p_2^{-1}(\Lambda(\bv)_{i,c})
  := \tilde \Lambda (\bv,c\be^i)_{i,0}
\]
and we have
\begin{equation} \label{S:eq:restricted-projections}
  \Lambda(\bv-c\be^i)_{i,0} \xleftarrow{p_1} \tilde \Lambda(\bv,
  c\be^i)_{i,0} \xrightarrow{p_2} \Lambda(\bv)_{i,c}.
\end{equation}

\begin{lemma}[{\cite[Lemma~5.2.3]{KS97}}]
\label{S:lem:fiber-bundles}
The maps $p_1$ and $p_2$ in \eqref{S:eq:restricted-projections} are
fiber bundles with smooth fibers.
\end{lemma}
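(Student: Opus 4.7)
The plan is to verify both maps by explicitly computing their fibers and showing each is smooth of constant dimension, then invoking $G_{V^\bv}$-equivariance for local triviality. The two hypotheses $\varepsilon_i(x)=c$ and $\varepsilon_i(x')=0$ will do the essential work on the two sides.

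For $p_2$, I would fix $x\in\Lambda(\bv)_{i,c}$ and observe that a triple $(x,\phi',\bar\phi)\in p_2^{-1}(x)$ is the same as an $x$-invariant $Q_0$-graded subspace $U:=\im\phi'\subseteq V^\bv$ of graded codimension $c\be^i$, together with isomorphisms $V^{\bv-c\be^i}\xrightarrow{\cong}U$ and $V^\bv/U\xrightarrow{\cong}V^{c\be^i}$. Because $Q$ has no loops at $i$, $x$-invariance forces $U_j=V_j$ for $j\neq i$ and $U_i\supseteq\im\bigl(\bigoplus_{t(a)=i}V_{s(a)}\to V_i\bigr)$. The requirement that the triple lies in $\tilde\Lambda(\bv,c\be^i)_{i,0}$, i.e.\ that $\varepsilon_i(x|_U)=0$, then forces $U_i$ to equal this image, which has codimension exactly $c$ by the hypothesis $\varepsilon_i(x)=c$. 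Thus $U$ is canonically determined by $x$, and the fiber is a torsor under $G_{V^{\bv-c\be^i}}\times G_{V^{c\be^i}}$, hence smooth and of constant dimension.

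For $p_1$, I would fix $x'\in\Lambda(\bv-c\be^i)_{i,0}$ and describe the fiber as the set of extensions. Writing $V_i=U_i\oplus\bar V$ with $U_i\cong V^{\bv-c\be^i}_i$ and $\bar V\cong V^{c\be^i}_i$, invariance of $U=\im\phi'$ makes the maps $x_a$ with $t(a)=i$ identical to $x'_a$, so the only new data is the collection $y_b:\bar V\to V_{t(b)}$ for arrows $b$ with $s(b)=i$. The moment map condition at a vertex $j\neq i$ is automatic, and at vertex $i$ it reduces to the single linear equation $\sum_{t(a)=i}\epsilon(a)\,x'_a\circ y_{\bar a}=0$ of maps $\bar V\to U_i$. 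This is just $\Hom(\bar V,-)$ applied to the map $\bigoplus_{t(a)=i}V^{\bv-c\be^i}_{s(a)}\to V^{\bv-c\be^i}_i$ whose cokernel defines $\varepsilon_i(x')$; the hypothesis $\varepsilon_i(x')=0$ says the latter is surjective, so the linear constraint on $(y_b)$ has constant rank. Its solution space is an affine subspace of fixed codimension. Nilpotency of $x$ is equivalent to nilpotency of $x'$, as noted before the lemma, so imposes no further condition. Combining with the torsor of splittings, $p_1^{-1}(x')$ is smooth of constant dimension.

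The final step is local triviality, which follows from $G_{V^\bv}$-equivariance: local sections may be built by choosing a complement $\bar V$ of $U_i$ Zariski-locally (parametrized by an open subset of a Grassmannian, hence smooth) and then solving the linear moment-map equation in $(y_b)$ on that open set. The main obstacle is the rank analysis of the moment-map constraint for $p_1$; without the hypothesis $\varepsilon_i(x')=0$ the rank of $\bigoplus x'_a$ would only be lower-semicontinuous and the fiber dimensions could jump, which is precisely why one restricts to the stratum $\Lambda(\bv-c\be^i)_{i,0}$.
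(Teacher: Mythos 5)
The paper does not actually prove this lemma---it is quoted from Kashiwara--Saito, and the description of the fibers is deferred to Exercise~\ref{S:ex:describe-fibers}---so your write-up is best read as a solution to that exercise, and it is a correct reconstruction of the standard argument. You identify exactly the two essential points: for $p_2$, $x$-invariance plus $\varepsilon_i(x)=c$ pins $U_i$ down as the image of $\bigoplus_{t(a)=i}V_{s(a)}\to V_i$, leaving only a $G_{V^{\bv-c\be^i}}\times G_{V^{c\be^i}}$-torsor of framings; for $p_1$, the only new data are the components of $x$ on arrows leaving $i$ restricted to a complement of $U_i$, and the moment-map equation at $i$ becomes $\Hom(\bar V,-)$ applied to a map that is surjective precisely because $\varepsilon_i(x')=0$, so the solution space has constant dimension. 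The only place to tighten is the fiber of $p_1$: it also carries the varying choice of $(\phi',\bar\phi)$ (equivalently of $U_i$ and the two framings), so the honest statement is that $p_1^{-1}(x')$ is an affine-space bundle over the smooth variety of such exact sequences; you gesture at this with the ``torsor of splittings,'' and the Zariski-local triviality via locally chosen complements is the standard finishing move.
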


\begin{exercise} \label{S:ex:describe-fibers}
Describe the fibers (see \cite[Lemma~5.2.3]{KS97}).
\end{exercise}

\begin{corollary} \label{S:cor:irred-comp-bijection}
If $\Lambda(\bv)_{i,c} \ne \emptyset$, then we have a bijective
correspondence
\begin{center}
\begin{tabular}{rcl}
  Irreducible components of $\Lambda(\bv-c\be^i)_{i,0}$ &
  $\stackrel{\text{1-1}}{\longleftrightarrow}$ & Irreducible components
  of $\Lambda(\bv)_{i,c}$.
\end{tabular}
\end{center}
\end{corollary}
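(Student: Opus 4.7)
The plan is to deduce the stated bijection from the diagram \eqref{S:eq:restricted-projections} by combining Lemma \ref{S:lem:fiber-bundles} with the following standard fact: if $f : X \to Y$ is a Zariski-locally trivial fiber bundle whose fibers are smooth and connected, then $Z \mapsto \overline{f^{-1}(Z)}$ is a bijection between the irreducible components of $Y$ and the irreducible components of $X$. The reason is that locally $f$ looks like a projection $U \times F \to U$ with $U$ open in $Y$ and $F$ the (irreducible) fiber, so that $f^{-1}(Z)$ is locally a product of an irreducible open subset of $Z$ with $F$, hence irreducible; conversely every irreducible component of $X$ dominates some irreducible component of $Y$ since $f$ is open and surjective on components.

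First I would invoke Lemma \ref{S:lem:fiber-bundles} to conclude that $p_1$ and $p_2$ are fiber bundles with smooth fibers. Then I would check that these fibers are connected (equivalently, irreducible), using the explicit description alluded to in Exercise \ref{S:ex:describe-fibers} and \cite[Lemma~5.2.3]{KS97}. Intuitively, a fiber of $p_2$ over $x \in \Lambda(\bv)_{i,c}$ parametrizes choices of $(\phi', \bar\phi)$ inducing the given exact sequence; because the condition $\varepsilon_i(x) = c$ pins down the $x$-invariant graded subspace of $V^\bv$ of graded dimension $\bv - c\be^i$ (namely, its $i$-th component must contain the image of $\bigoplus_{t(a) = i} x_a$), this fiber is a torsor for the connected group $G_{V^{\bv - c\be^i}} \times G_{V^{c\be^i}}$. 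Similarly, a fiber of $p_1$ over $x' \in \Lambda(\bv - c\be^i)_{i,0}$ parametrizes nilpotent extensions of $x'$ by the trivial representation at $i$, which forms an affine space. Both descriptions give irreducible fibers.

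With the general principle in place, I would apply it twice to the diagram \eqref{S:eq:restricted-projections}: first to $p_1$, obtaining a bijection between the irreducible components of $\Lambda(\bv - c\be^i)_{i,0}$ and those of $\tilde\Lambda(\bv, c\be^i)_{i,0}$; then to $p_2$, obtaining a bijection between the irreducible components of $\tilde\Lambda(\bv, c\be^i)_{i,0}$ and those of $\Lambda(\bv)_{i,c}$. Composing yields the desired bijection. Explicitly, an irreducible component $Z$ of $\Lambda(\bv - c\be^i)_{i,0}$ corresponds to the closure of $p_2(p_1^{-1}(Z))$ in $\Lambda(\bv)_{i,c}$.

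The main obstacle is the verification of connectedness of the fibers, which is not part of the statement of Lemma \ref{S:lem:fiber-bundles} as quoted. If the fibers turn out to be merely smooth but disconnected, the clean bijection would need to be refined to track connected components. In practice, the description in \cite[Lemma~5.2.3]{KS97} settles this, and I would simply cite that reference to close the argument.
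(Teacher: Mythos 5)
Your proposal is correct and follows essentially the same route as the paper, which presents the corollary as an immediate consequence of Lemma~\ref{S:lem:fiber-bundles} via the standard principle that fiber bundles with smooth irreducible fibers induce bijections on irreducible components (applied to $p_1$ and $p_2$ and composed). Your extra care about connectedness of the fibers is well placed --- the lemma as quoted only says ``smooth'' --- and, as you note, the explicit fiber descriptions in \cite[Lemma~5.2.3]{KS97} (a principal $G$-torsor for $p_2$ and an irreducible fiber for $p_1$) supply exactly what is needed.
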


Let $B(\bv,\infty)$ be the set of irreducible components of
$\Lambda(\bv)$ and $B_g(\infty) = \bigsqcup_\bv B(\bv,\infty)$.
Since the sets $\Lambda(\bv)_{i,c}$ are locally closed, for each $X
\in B(\bv,\infty)$ there is an open dense subset of $X$ where
$\varepsilon_i$ takes a fixed value.  We define $\varepsilon_i(X)$
to be this value.  For $c \in \N$, we let
\[
  B(\bv,\infty)_{i,c} = \{X \in B(\bv,\infty)\ |\
  \varepsilon_i(X)=c\}.
\]
By Corollary~\ref{S:cor:irred-comp-bijection}, we have a bijective
correspondence
\[
  B(\bv-c\be^i)_{i,0} \cong B(\bv)_{i,c},\quad \bar X
  \leftrightarrow X.
\]

Define maps
\begin{align*}
  \tilde f_i^c &: B(\bv-c\be^i,\infty)_{i,0} \to
  B(\bv,\infty)_{i,c},\quad \tilde f_i^c(\bar X) = X, \\
  \tilde e_i^c &: B(\bv,\infty)_{i,c} \to B(\bv-c\be^i,
  \infty)_{i,0},\quad \tilde e_i^c(X) = \bar X,
\end{align*}
and then define
\[
  \tilde e_i, \tilde f_i : B_g(\infty) \to B_g(\infty) \sqcup \{0\}
\]
by
\begin{align*}
  \tilde e_i^c &: B(\bv,\infty)_{i,c} \xrightarrow{\tilde e_i^c}
  B(\bv-c\be^i,\infty)_{i,0}
  \xrightarrow{\tilde f_i^{c-1}} B(\bv-\be^i, \infty)_{i,c-1},\quad c >0,\\
  \tilde f_i^c &: B(\bv,\infty)_{i,c} \xrightarrow{\tilde e_i^c}
  B(\bv-c\be^i,\infty)_{i,0} \xrightarrow{\tilde f_i^{c+1}}
  B(\bv+\be^i,\infty)_{i,c+1}.
\end{align*}
We set $\tilde e_i(X)=0$ for $X \in B(\bv,\infty)_{i,0}$.  We also
define
\begin{gather*}
  \wt : B_g(\infty) \to P,\quad \wt(X) = -\sum_{i \in Q_0} v_i
  \alpha_i \text{ for } X \in B(\bv,\infty),\\
  \varphi_i(X) = \varepsilon_i(X) + \langle h_i, \wt(X) \rangle.
\end{gather*}

Here $h_i=[e_i,f_i]$ is the usual element of the Cartan subalgebra
of $\g$ and $\langle \cdot, \cdot \rangle$ denotes the pairing of
$\mathfrak{h}$ with $\mathfrak{h}^*$.  The following theorem was
proved by Kashiwara and Saito.

\begin{theorem}[{\cite[Theorem~5.3.2]{KS97}}] \label{S:thm:Binf-crystal}
The definitions above endow $B_g(\infty)$ with the structure of a
$\g$-crystal and $B_g(\infty)$ is isomorphic to $B(\infty)$, the
crystal corresponding to the lower half of the quantized enveloping
algebra, $U_q^-(\g)$.
\end{theorem}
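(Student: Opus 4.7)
The plan is to first check that the definitions satisfy the axioms of an abstract $\g$-crystal, and then invoke Kashiwara's characterization of $B(\infty)$ to identify $B_g(\infty)$ with it. Most of the crystal axioms are formal consequences of the bijection in Corollary~\ref{S:cor:irred-comp-bijection} together with the definitions of $\wt$, $\varepsilon_i$, and $\varphi_i$. Specifically, if $X\in B(\bv,\infty)_{i,c}$ and we write $\bar X=\tilde e_i^cX\in B(\bv-c\be^i,\infty)_{i,0}$, then $\tilde e_i X$ lives in $B(\bv-\be^i,\infty)_{i,c-1}$, so $\wt(\tilde e_i X)=\wt(X)+\alpha_i$ is immediate, and $\varepsilon_i(\tilde e_i X)=c-1=\varepsilon_i(X)-1$ by construction; the corresponding identities for $\varphi_i$ then follow from its definition $\varphi_i(X)=\varepsilon_i(X)+\langle h_i,\wt(X)\rangle$. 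The relation $\tilde f_iY=X\iff\tilde e_iX=Y$ (for $X,Y\ne 0$) is built in because $\tilde e_i^c$ and $\tilde f_i^c$ in \eqref{S:eq:restricted-projections} are mutually inverse bijections.

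The more substantive step is to identify $B_g(\infty)$ with the crystal $B(\infty)$ of $U_q^-(\g)$. I would use Kashiwara's characterization theorem, which states that $B(\infty)$ is the unique (up to isomorphism) $\g$-crystal $B$ equipped with a second family of Kashiwara operators $\tilde e_i^*$, $\tilde f_i^*$ (the ``star'' operators, corresponding to the involution on $U_q^-(\g)$ coming from the antiautomorphism $*$) such that: (i) there is a distinguished element $b_\infty\in B$ with $\wt(b_\infty)=0$ and $\tilde e_i b_\infty=\tilde e_i^*b_\infty=0$ for all $i$; (ii) $B$ is generated from $b_\infty$ by the $\tilde f_i$'s; (iii) the starred operators satisfy the same crystal axioms as the unstarred ones; and (iv) a certain compatibility between the two families holds (the ``$*$-commutation'' relations of Kashiwara--Saito). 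On the geometric side, the element $b_\infty$ is the unique irreducible component of $\Lambda(0)=\{0\}$. To construct the starred operators geometrically, I would repeat the construction of $p_1,p_2$ in \eqref{S:eq:restricted-projections} but using short exact sequences of the form $0\to V^{c\be^i}\to V^{\bv}\to V^{\bv-c\be^i}\to 0$ with $\im\phi'$ $x$-invariant and $x$ acting as zero on this image; equivalently, one defines $\varepsilon_i^*(x)=\dim\Ker\bigl(V_i\to\bigoplus_{s(a)=i}V_{t(a)}\bigr)$ and repeats the construction.

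After these definitions, the bulk of the work is to verify the key Kashiwara--Saito compatibility condition relating $\tilde e_i^*$ and $\tilde f_j$ at the geometric level. This is where I expect the main obstacle to lie: one must show that for $i\ne j$, the operators $\tilde e_i^*$ and $\tilde f_j$ commute on irreducible components (this is Proposition~5.3.1 of \cite{KS97}), and that for $i=j$ the appropriate rank-two relation holds. Both statements reduce to geometric assertions about how a component $X\subseteq\Lambda(\bv)$ sits inside the subvarieties cut out by level sets of $\varepsilon_i$ and $\varepsilon_i^*$ simultaneously. The proof of these compatibilities uses the fact that $\Lambda_V$ is Lagrangian (property (2) of $\Lambda_V$ listed in Section~\ref{S:sec:LQV}) together with a careful analysis of the generic behaviour of an irreducible component under both the $p_1,p_2$ projections and their starred analogues. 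Granting those compatibilities, Kashiwara's characterization produces a unique crystal isomorphism $B_g(\infty)\xrightarrow{\sim} B(\infty)$ sending the component $\{0\}\in B(0,\infty)$ to the highest weight element $b_\infty$.
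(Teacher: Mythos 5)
The paper does not prove this theorem: it is quoted directly from Kashiwara--Saito \cite[Theorem~5.3.2]{KS97}, so there is no in-text argument to compare against. Your outline is a faithful sketch of the strategy actually used in the cited source: the formal crystal axioms do follow from Corollary~\ref{S:cor:irred-comp-bijection} as you say, and the identification with $B(\infty)$ is indeed obtained by defining geometric star operators via the ``socle'' analogue $\varepsilon_i^*(x)=\dim\Ker\bigl(V_i\to\bigoplus_{a,\,s(a)=i}V_{t(a)}\bigr)$ and invoking Kashiwara's uniqueness characterization of $B(\infty)$. The substantive content --- the $*$-compatibility relations and the fact that every component can be brought to the component of $\Lambda(0)$ by the $\tilde e_i$ and $\tilde e_i^*$ --- is left unproven in your sketch, but you correctly identify it as the crux and point to the right place (Proposition~5.3.1 of \cite{KS97}), so as an outline this is accurate rather than gapped.
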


%
\section{The lagrangian Nakajima quiver variety} \label{S:sec:Nak-QV}
%

The goal in this section is to modify the definition of the Lusztig
quiver variety to obtain varieties giving a geometric realization of
the crystals of irreducible integrable highest weight
representations. To motivate the definitions, we first recall the
construction of these representations via Verma modules.

As before, we let $\g$ be a Kac-Moody algebra with symmetric Cartan
matrix, consider a triangular decomposition
\[
  \g = \mathfrak{n}^- \oplus \mathfrak{h} \oplus \mathfrak{n}^+,
\]
and set $\mathfrak{b}^\pm = \mathfrak{h} \oplus \mathfrak{n}^\pm$.
We fix a highest weight $\lambda \in P^+$, where $P^+$ is the
dominant weight lattice.  Then we define a
$U(\mathfrak{b}^+)$-module $\C_\lambda = \C v_\lambda$ by
\begin{align*}
  h \cdot v_\lambda &= \lambda(h) v_\lambda,\quad h \in \mathfrak{h}, \\
  x \cdot v_\lambda &= 0,\quad x \in \mathfrak{n}^+.
\end{align*}
The \emph{Verma module} of highest weight $\lambda$ is then defined
to be the $U(\g)$-module
\[
  M(\lambda) := U(\g) \otimes_{U(\mathfrak{b}^+)} \C_\lambda,
\]
with the $U(\g)$-action given by left multiplication on the left
factor. It follows that $M(\lambda) \cong U(\mathfrak{n}^-)
\otimes_\C \C_\lambda$ as vector spaces.  There exists a unique
maximal submodule $I(\lambda)$ of $M(\lambda)$ and the irreducible
integrable highest weight representation of highest weight $\lambda$
is defined to be the quotient $V(\lambda) := M(\lambda)/I(\lambda)$.

We summarize this construction as follows:
\begin{equation} \label{S:verma-construction}
  \xymatrix{
    U(\mathfrak{n}^-) \ar@{~>}[rrr]^{\text{shift weights}} & & &
    M(\lambda) \ar@{~>}[rr]^{\text{cut}} & & V(\lambda).
  }
\end{equation}

An analogous phenomenon occurs for crystals.  Namely, the crystal
$B(\lambda)$ embeds into the crystal $B(\infty)$ in the following
manner.  For all $\lambda \in P^+$, there exists a map $\psi_\lambda
: B(\lambda) \to B(\infty)$ such that
\begin{enumerate}
  \item $\psi_\lambda$ is injective,
  \item $\psi_\lambda(b_\lambda) = 1$,
  \item $\psi_\lambda (\tilde f_i b) = \tilde f_i \psi_\lambda(b)$
  when $\tilde f_i(b)=0$,
  \item $\psi_\lambda (\tilde e_i b) = \tilde e_i \psi_\lambda(b)$
  for all $b \in B(\lambda)$, and
  \item \label{S:crystal-embedding-weight-shift}
  $\wt \psi_\lambda(b) = \wt b - \lambda$,
  $\varepsilon_i(\psi_\lambda(b)) = \varepsilon_i(b)$ for all $b \in
  B(\lambda)$.
\end{enumerate}
Property \eqref{S:crystal-embedding-weight-shift} is analogous to
the ``shifting of weights'' in \eqref{S:verma-construction} and the
fact that some $\tilde f_i$'s act as zero in $B(\lambda)$ (while
this does not occur in $B(\infty)$) corresponds to the ``cutting''
in \eqref{S:verma-construction}.  We now aim to mimic these
procedures using quiver varieties.

Let $Q=(Q_0,Q_1)$ be the double quiver corresponding to our
Kac-Moody algebra $\g$ and choose an orientation $\Omega$.  Fix
$\bv, \bw \in \N^{Q_0}$ and let $V$ and $W$ be $Q_0$-graded vector
spaces of graded dimensions $\bv$ and $\bw$ respectively.  The
graded dimension $\bw$ will correspond to the highest weight
\begin{equation} \label{S:eq:omega-w} \textstyle
  \omega_w := \sum_{i \in Q_0} w_i \omega_i
\end{equation}
of the representation whose crystal we would like to construct (here
$\omega_i$ is the $i$th fundamental weight of $\g$). The graded
dimension $\bv$ corresponds to a weight space in that
representation.  More specifically, it corresponds to the weight
space of weight
\begin{equation} \label{S:eq:alpha-v} \textstyle
  \omega_w - \alpha_v, \text{ where } \alpha_v = \sum_{i \in Q_0}
  v_i \alpha_i.
\end{equation}
Here $\alpha_i$ is the $i$th simple root of $\g$.

Define
\[ \textstyle
  \Lambda(\bv,\bw) := \Lambda(\bv) \times \bigoplus_{i \in Q_0}
  \Hom(V_i,W_i).
\]
See Figure~\ref{S:fig:Lambda-v-w}. The space $\Hom(V_i,W_i)$ is
affine and hence we have a bijection
\[
  \text{Irreducible components of } \Lambda(\bv,\bw)
  \stackrel{\text{1-1}}{\longleftrightarrow} \text{Irreducible components
  of $\Lambda(\bv)$}.
\]
We think of the passage from $\Lambda(\bv)$ to $\Lambda(\bv,\bw)$ as
the analogue of the ``shifting weights'' procedure in
\eqref{S:verma-construction}.

\begin{figure}
\begin{center}
\begin{tikzpicture}
  \draw (0,0) node[label=270:{$V_1$}] {$\bullet$} to[->,out=45,in=180] (0.5,0.3);
  \draw (0.5,0.3) to[out=0,in=135] (1,0);
  \draw (0,0) to[-<,out=-45,in=180] (0.5,-0.3);
  \draw (0.5,-0.3) to[out=0,in=225] (1,0);
$
  \draw (1,0) node[label=270:{$V_2$}] {$\bullet$} to[->,out=45,in=180] (1.5,0.3);
  \draw (1.5,0.3) to[out=0,in=135] (2,0);
  \draw (1,0) to[-<,out=-45,in=180] (1.5,-0.3);
  \draw (1.5,-0.3) to[out=0,in=225] (2,0);
$
  \draw (2,0) node[label=270:{$V_3$}] {$\bullet$} to[->,out=45,in=180] (2.5,0.3);
  \draw (2.5,0.3) to[out=0,in=135] (3,0);
  \draw (2,0) to[-<,out=-45,in=180] (2.5,-0.3);
  \draw (2.5,-0.3) to[out=0,in=225] (3,0);
$
  \draw (3,0) node[label=270:{$V_4$}] {$\bullet$} to[->,out=45,in=180] (3.5,0.3);
  \draw (3.5,0.3) to[out=0,in=135] (4,0);
  \draw (3,0) to[-<,out=-45,in=180] (3.5,-0.3);
  \draw (3.5,-0.3) to[out=0,in=225] (4,0);
$
  \draw (4,0) node[label=270:{$V_5$}] {$\bullet$} to[->,out=45,in=180] (4.5,0.3);
  \draw (4.5,0.3) to[out=0,in=135] (5,0);
  \draw (4,0) to[-<,out=-45,in=180] (4.5,-0.3);
  \draw (4.5,-0.3) to[out=0,in=225] (5,0);
  \draw (5,0) node[label=270:{$V_6$}] {$\bullet$};
  \draw[rotate around={30.96:(5,0)}] (5,0) to[->,out=45,in=180] (5.61,0.2);
  \draw[rotate around={30.96:(5,0)}] (5.61,0.2) to[out=0,in=135] (6.22,0)
    node {$\bullet$} node[below right] {$V_7$};
  \draw[rotate around={30.96:(5,0)}] (5,0) to[-<,out=-45,in=180] (5.61,-0.2);
  \draw[rotate around={30.96:(5,0)}] (5.61,-0.2) to[out=0,in=-135] (6.22,0);
  \draw[rotate around={-30.96:(5,0)}] (5,0) to[->,out=45,in=180] (5.61,0.2);
  \draw[rotate around={-30.96:(5,0)}] (5.61,0.2) to[out=0,in=135] (6.22,0)
    node[label=270:{$V_8$}] {$\bullet$};
  \draw[rotate around={-30.96:(5,0)}] (5,0) to[-<,out=-45,in=180] (5.61,-0.2);
  \draw[rotate around={-30.96:(5,0)}] (5.61,-0.2) to[out=0,in=-135] (6.22,0);
  \draw (0,0) to[->,dashed] (0,1.5);
  \draw (0,1.5) to[dashed] (0,3) node {$\bullet$} node[above] {$W_1$};
  \draw (1,0) to[->,dashed] (1,1.5);
  \draw (1,1.5) to[dashed] (1,3) node {$\bullet$} node[above] {$W_2$};
  \draw (2,0) to[->,dashed] (2,1.5);
  \draw (2,1.5) to[dashed] (2,3) node {$\bullet$} node[above] {$W_3$};
  \draw (3,0) to[->,dashed] (3,1.5);
  \draw (3,1.5) to[dashed] (3,3) node {$\bullet$} node[above] {$W_4$};
  \draw (4,0) to[->,dashed] (4,1.5);
  \draw (4,1.5) to[dashed] (4,3) node {$\bullet$} node[above] {$W_5$};
  \draw (5,0) to[->,dashed] (5,1.5);
  \draw (5,1.5) to[dashed] (5,3) node {$\bullet$} node[above] {$W_6$};
  \draw (6,-0.7) to[->,dashed,out=45,in=270] (6.7,0.8);
  \draw (6.7,0.8) to[dashed,out=90,in=-45] (6,2.3) node {$\bullet$} node[above] {$W_8$};
  \draw (6,0.7) to[->,dashed,out=135,in=270] (5.5,2.2);
  \draw (5.5,2.2) to[dashed,out=90,in=225] (6,3.7) node {$\bullet$} node[above] {$W_7$};
\end{tikzpicture}
\end{center}
\caption{The linear maps involved in the definition of the space
$\Lambda(\bv,\bw)$ \label{S:fig:Lambda-v-w} when
$\mathfrak{g}=\mathfrak{so}_{16}$, the simple Lie algebra of type
$D_8$.}
\end{figure}
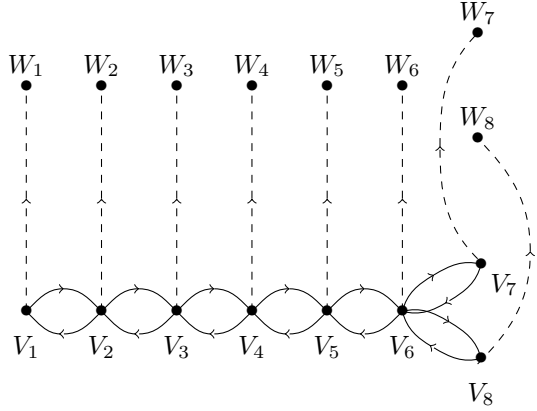

\begin{definition}[Stability condition] \label{S:def:stability}
We say a point $(x,t) \in \Lambda(\bv,\bw)$ is \emph{stable} if the
following condition holds: If $S$ is an $x$-invariant $Q_0$-graded
subspace of $V$ such that $t_i(S_i)=0$ for all $i \in Q_0$, then
$S_i=0$ for all $i \in Q_0$.  We denote the set of stable points in
$\Lambda(\bv,\bw)$ by $\Lambda(\bv,\bw)^\st$.
\end{definition}

The group $G_V$ acts on $\Lambda(\bv,\bw)$ as
\[
  (g,(x,t)) = (x',t'), \text{ where } x_a'=g_{t(a)} x_a
  g_{s(a)}^{-1},\ t_i'=t_i g_i^{-1}.
\]

\begin{lemma}[{\cite[Lemma~3.10]{Nak94}}]
\label{S:lem:trival-stabilizer}
The stabilizer of any point of $\Lambda(\bv,\bw)^\st$ in $G_V$ is
trivial.
\end{lemma}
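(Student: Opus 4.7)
The plan is to produce, from any stabilizing element $g \in G_V$, an $x$-invariant $Q_0$-graded subspace of $V$ annihilated by $t$, and then invoke the stability hypothesis directly. The natural candidate for such a subspace is the graded subspace
\[
  S = \bigoplus_{i \in Q_0} S_i, \qquad S_i := \im(g_i - \id_{V_i}) \subseteq V_i,
\]
which measures exactly where $g$ fails to act as the identity. Showing $g = \id$ then reduces to showing $S = 0$, which is precisely the conclusion of the stability condition.

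First I would unpack what it means for $g$ to stabilize $(x,t)$. The equation $(g,(x,t)) = (x,t)$ amounts to $g_{t(a)} x_a = x_a g_{s(a)}$ for every $a \in Q_1$ and $t_i g_i = t_i$ for every $i \in Q_0$. The second identity immediately rewrites as $t_i \circ (g_i - \id) = 0$, which gives $t_i(S_i) = 0$ for every $i$. The first identity is then used to check $x$-invariance of $S$: for $w = (g_{s(a)} - \id)v \in S_{s(a)}$ one computes
\[
  x_a(w) = x_a g_{s(a)} v - x_a v = g_{t(a)} x_a v - x_a v = (g_{t(a)} - \id)(x_a v) \in S_{t(a)}.
\]

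With these two properties in hand, the stability of $(x,t)$ forces $S_i = 0$ for all $i$, i.e.\ $g_i = \id_{V_i}$ for all $i \in Q_0$, and hence $g$ is the identity element of $G_V$. There is really no main obstacle here; the content of the lemma is that the intertwining relation $g_{t(a)} x_a = x_a g_{s(a)}$ (saying $g - \id$ has $x$-invariant image) combines cleanly with the annihilation relation $t_i(g_i - \id) = 0$ (saying this image is killed by $t$) to let stability do all the work. Note that neither the moment-map equation $\psi(x) = 0$ nor the nilpotency of $x$ enters the argument — only the ambient set-theoretic structure of $\Lambda(\bv,\bw)$ and the stability condition are used.
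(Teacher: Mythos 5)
Your proof is correct. The paper itself gives no argument for this lemma, only a citation to \cite[Lemma~3.10]{Nak94}, and your argument --- taking $S_i = \im(g_i - \id_{V_i})$, checking it is $x$-invariant and killed by $t$, and applying stability --- is precisely the standard proof given in that reference; your closing observation that only the stability condition (and not the moment map or nilpotency) is used is also accurate.
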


\begin{definition}[Lagrangian Nakajima quiver variety]
\label{S:def:NQV}
We define
\[
  \mathfrak{L}(\bv,\bw) = \Lambda(\bv,\bw)^\st/G_V,
\]
and we call $\mathfrak{L}(\bv,\bw)$ a \emph{lagrangian Nakajima
quiver variety}.
\end{definition}

\begin{remarks} \label{S:rem:NQV}
\begin{enumerate}
  \item The name lagrangian Nakajima quiver variety arises from the
  fact that the varieties defined here are lagrangian subvarieties
  of what are called (smooth) Nakajima quiver varieties.  See
  \cite{Nak94,Nak98} for details.

  \item For a given irreducible component $X$ of $\Lambda(\bv,\bw)$, one
  of the following two conditions holds:
  \begin{enumerate}
    \item No point of $X$ satisfies the stability condition.
    \item All points in an open dense subset of $X$ satisfy the
    stability condition.
  \end{enumerate}
  Thus we speak of each irreducible component as failing or
  satisfying the stability condition in case the first or second
  condition (respectively) above holds.  Imposing the stability
  condition corresponds to the ``cutting'' procedure in
  \eqref{S:verma-construction}.

  \item Since $G_V$ acts freely on $\Lambda(\bv,\bw)^\st$, there is
  a bijection
  \[
    \text{Irreducible components of } \Lambda(\bv,\bw)^\st
    \stackrel{\text{1-1}}{\longleftrightarrow} \text{Irreducible
    components of } \mathfrak{L}(\bv,\bw).
  \]
\end{enumerate}
\end{remarks}

We summarize the geometric construction as follows (compare to
\eqref{S:verma-construction}):
\[
  \xymatrix{
    \Lambda(\bv) \ar@{~>}[rr]^(0.3){\text{shift weights}} && \Lambda(\bv)
    \times \bigoplus_{i \in Q_0} \Hom(V_i,W_i)
    \ar@{~>}[rr]^(.65){\text{cut}} && \Lambda(\bv,\bw)^\st \ar@{~>}[r]
    & \mathfrak{L}(\bv,\bw).
  }
\]

\begin{example} \label{S:eg:sl2}
Let $\g=\mathfrak{sl}_2$.  In this case $Q$ is the quiver consisting
of a single vertex and no edges.  Then $\bv=v$ and $\bw=w$ are just
natural numbers and
\[
  \Lambda(v,w)^\st = \{t : V \to W\ |\ t \text{ injective}\},
\]
which is empty unless $v \le w$.  If this condition is satisfied,
then
\[
  \mathfrak{L}(v,w) = \Lambda(v,w)^\st/GL(V) \cong \Gr(v,w),
\]
where $\Gr(v,w)$ is the grassmannian of $v$-dimensional subspaces of
$\C^w$ and the isomorphism is induced by the map $t \mapsto \im t$.
\end{example}

\begin{exercise} \label{S:ex:flag-variety}
Let $\g=\mathfrak{sl}_{n+1}$ and $\bw=N\be^n$ for some $N \in \N$.
Then we can picture the maps involved in the quiver variety as
follows:
\bigskip
\begin{center}
\begin{tikzpicture}
  \draw[->] node[label=270:1] {$\bullet$} (0,0) to[out=45,in=180]
  (0.5,0.3);
  \draw (0.5,0.3) to[out=0,in=135] (1,0) node[label=270:2] {$\bullet$};
  \draw[->] (1,0) to[out=225,in=0] (0.5,-0.3);
  \draw (0.5,-0.3) to[out=180,in=-45] (0,0);
  \draw[->] (1,0) to[out=45,in=180] (1.5,0.3);
  \draw (1.5,0.3) to[out=0,in=135] (2,0) node[label=270:3] {$\bullet$};
  \draw[->] (2,0) to[out=225,in=0] (1.5,-0.3);
  \draw (1.5,-0.3) to[out=180,in=-45] (1,0);
  \draw[->] (2,0) to[out=45,in=180] (2.5,0.3);
  \draw[>-] (2.5,-0.3) to[out=180,in=-45] (2,0);
  \draw (2.75,0) node {$\cdots$};
  \draw[>-] (3,0.3) to[out=0,in=135] (3.5,0) node {$\bullet$};
  \draw[->] (3.5,0) to[out=225,in=0] (3,-0.3);
  \draw[->] (3.5,0) to[out=45,in=180] (4,0.3);
  \draw (4,0.3) to[out=0,in=135] (4.5,0) node[label=270:$n$] {$\bullet$};
  \draw[->] (4.5,0) to[out=225,in=0] (4,-0.3);
  \draw (4,-0.3) to[out=180,in=-45] (3.5,0);
  \draw[->] (4.5,0) to (4.5,0.5);
  \draw (4.5,0.5) to (4.5,1) node[label=90:{$W_n=\C^N$}] {$\bullet$};
\end{tikzpicture}
\end{center}
\bigskip
Show that $\mathfrak{L}(v,w)$ is empty unless $v_1 \le v_2 \le \dots
\le v_n \le N$, in which case
\begin{align*}
  \mathfrak{L}(\bv,\bw) &\cong \{0 \subseteq V_1 \subseteq \dots
  \subseteq V_n \subseteq \C^N\ |\ \dim V_i = v_i\}, \\
  (x,t) &\mapsto (V_i), \text{ where } V_i = \im t a_{n-1\to n}
  a_{n-2 \to n-1} \dots a_{i \to i+1}.
\end{align*}
Here $a_{j \to j+1}$ is the arrow from vertex $j$ to vertex $j+1$
for $1 \le j \le n-1$.  Thus $\mathfrak{L}(\bv,\bw)$ is the partial
flag variety. See
\lecturecite{Section~\ref{J:se15}}{\cite[Section~4.1]{Kamnitzer}}.
\end{exercise}

\begin{example} \label{S:eg:NQV-sl3-adjoint}
Let $\g=\mathfrak{sl}_3$, $\bw = \be^1 + \be^2$, and
$\bv=\be^1+\be^2$.  This corresponds, under the identifications
\eqref{S:eq:omega-w} and \eqref{S:eq:alpha-v}, to the weight zero
subspace of the adjoint representation of $\mathfrak{sl}_3$ (which
is two-dimensional).  We can picture the maps involved in our quiver
varieties as follows:
\bigskip
\begin{center}
\begin{tikzpicture}
  \draw[->] node[label=260:{$\C=V_1$}] {$\bullet$} (0,0)
  to[out=45,in=180] (1.5,0.5) node[above] {$x_a$};
  \draw (1.5,0.5) to[out=0,in=135] (3,0) node[label=280:{$V_2=\C$}]
  {$\bullet$};
  \draw[-<] (0,0) to[out=-45,in=180] (1.5,-0.5) node[below] {$x_{\bar a}$};
  \draw (1.5,-0.5) to[out=0,in=225] (3,0);
  \draw (0,0) to[->] (0,0.5) node[left] {$t_1$};
  \draw (0,0.5) to (0,1) node[label=90:{$W_1=\C$}] {$\bullet$};
  \draw (3,0) to[->] (3,0.5) node[right] {$t_2$};
  \draw (3,0.5) to (3,1) node[label=90:{$W_2=\C$}] {$\bullet$};
\end{tikzpicture}
\end{center}
\bigskip
We split the points of the quiver variety into two cases. We first
consider the case where $x_a=0$. Then the stability condition
implies that
\[
  \ker x_{\bar a} \cap \ker t_2 \ne 0,\quad t_1 \ne 0.
\]
There is a unique element of $GL(V_1)$ whose action changes $t_1$ to
the identity map.  Therefore, we can describe the quotient defining
the subvariety of $\mathfrak{L}(\bv,\bw)$ given by $x_a=0$ by fixing
$t_1=1$ and considering only the quotient by $GL(V_2)$.  Thus we can
identify this subvariety of the quiver variety with
\[
  \{(x_{\bar a}, t_2) \in \C^2\ |\ (x_{\bar a}, t_2) \ne (0,0)\}/\C
  \cong \mathbb{P}^1,
\]
where the quotient by $\C$ is the quotient by $GL(V_2)$.  The second
case, where $x_{\bar a}=0$ is analogous and also yields
$\mathbb{P}^1$.  Therefore, the lagrangian Nakajima quiver variety
$\mathfrak{L}(\bv,\bw)$ consists of two projective lines meeting at
a point (the point where $x_a$ and $x_{\bar a}$ are both zero).
\bigskip
\begin{center}
\begin{tikzpicture}
  \shade[ball color=white] (0,0) circle (1);
  \shade[ball color=white] (2,0) circle (1);
  \draw[->, very thick] (1,-1) node[label=270:{$x_a=0$, $x_{\bar a}=0$}]
  {} to (1,-0.2);
  \draw (1,0) node {$\bullet$};
\end{tikzpicture}
\end{center}
\end{example}

\begin{exercise} \label{S:ex:P1s}
Let $\g=\mathfrak{sl}_{n+1}$, $\bw = \be^1 + \be^n$ and $\bv=\be^1 +
\be^2 + \dots + \be^n$.  Under the identifications
\eqref{S:eq:omega-w} and \eqref{S:eq:alpha-v}, this corresponds to
the $n$-dimensional weight zero subspace of the adjoint
representation of $\mathfrak{sl}_{n+1}$. Show that the lagrangian
Nakajima quiver variety is a union of $n$ projective lines as shown
below.
\bigskip
\begin{center}
\begin{tikzpicture}
  \shade[ball color=white] (0,0) circle (1);
  \shade[ball color=white] (2,0) circle (1);
  \shade[ball color=white] (4,0) circle (1);
  \draw (5.6,0) node {$\dots$};
  \shade[ball color=white] (7,0) circle (1);
\end{tikzpicture}
\end{center}
\end{exercise}

Having defined the lagrangian Nakajima quiver varieties, our goal is
to describe the structure of a crystal on the set of irreducible
components of $\bigsqcup_\bv \mathfrak{L}(\bv,\bw)$.  For $c \in
\N$, define
\begin{multline*}
  \mathfrak{F}(\bv,\bw;c\be^i) = \{(x,t,S)\ |\ (x,t) \in
  \Lambda(\bv,\bw)^\st,\, S \text{ an $x$-invariant} \\
  \text{$Q_0$-graded
  subspace of $V$},\, \dim S = \bv-c\be^i\}/GL_V.
\end{multline*}
We then have natural projections
\[
  \mathfrak{L}(\bv-c\be^i,\bw) \xleftarrow{\pi_1} \mathfrak{F}(\bv,
  \bw; c\be^i) \xrightarrow{\pi_2} \mathfrak{L}(\bv,\bw),
\]
where $\pi_1(G_V \cdot (x,t)) = G_S \cdot (x|_S, t|_S)$ and
$\pi_2(G_V \cdot (x,t,S)) = G_V \cdot (x,t)$.  For $c \in \N$,
define
\[ \textstyle
  \varepsilon_i : \mathfrak{L}(\bv,\bw) \to \N,\ \varepsilon_i(x,t)
  = \dim \Coker \left( \bigoplus_{a,\, t(a)=i} V_{s(a)} \to V_i
  \right).
\]
For $i \in Q_0$ and $c \in \N$, let
\[
  \mathfrak{L}(\bv,\bw)_{i,c} = \{G_V \cdot (x,t) \in
  \mathfrak{L}(\bv,\bw)\ |\ \varepsilon_i((x,t)) = c\}.
\]
Then $\mathfrak{L}(\bv,\bw)_{i,c}$ is a locally closed subvariety of
$\mathfrak{L}(\bv,\bw)$.  If $\mathfrak{L}(\bv,\bw)_{i,c} \ne
\emptyset$, then
\[
  \pi_1^{-1}(\mathfrak{L}(\bv-c\be^i,\bw)_{i,0}) =
  \pi_2^{-1}(\mathfrak{L}(\bv,\bw)_{i,c}).
\]
Setting
\[
  \mathfrak{F}(\bv,\bw; c\be^i)_{i,0} =
  \pi_1^{-1}(\mathfrak{L}(\bv-c\be^i,\bw)_{i,0}) =
  \pi_2^{-1}(\mathfrak{L}(\bv,\bw)_{i,c}),
\]
we have the natural projections
\[
  \mathfrak{L}(\bv-c\be^i,\bw)_{i,0} \xleftarrow{\pi_1}
  \mathfrak{F}(\bv,\bw;c\be^i)_{i,0} \xrightarrow{\pi_2}
  \mathfrak{L}(\bv,\bw)_{i,c}.
\]
The restriction of $\pi_2$ to $\mathfrak{F}(\bv,\bw; c\be^i)_{i,0}$
is an isomorphism and $\mathfrak{L}(\bv-c\be^i,\bw)_{i,0}$ is an
open subvariety of $\mathfrak{L}(\bv-c\be^i,\bw)$.

\begin{lemma}[{\cite[Lemma~4.2.2]{S02}}] \label{S:lem:fibers}
\begin{enumerate}
  \item For $i \in Q_0$,
  \[
    \mathfrak{L}(\mathbf{0}, \bw)_{i,c} = \begin{cases}
      \mathrm{pt} & \text{if } c=0, \\
      \emptyset & \text{if } c>0.
    \end{cases}
  \]

  \item If $\mathfrak{L}(\bv,\bw)_{i,c} \ne \emptyset$, then the
  fiber of the restriction of $\pi_1$ to $\mathfrak{F}(\bv,\bw;
  c\be^i)_{i,0}$ is isomorphic to a grassmannian variety.
\end{enumerate}
\end{lemma}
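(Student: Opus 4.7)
Part (1) should fall out by direct inspection: if $\bv = \mathbf{0}$ then every $V_i$ vanishes, so $\Lambda(\mathbf{0},\bw) = \{(0,0)\}$, the stability condition and moment map are vacuous, $G_V$ is trivial, and every cokernel defining $\varepsilon_i$ is $0$. Hence $\mathfrak{L}(\mathbf{0},\bw)$ is a single point on which $\varepsilon_i$ is identically zero.

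For part (2), my plan is to describe the fiber over a point $[x',t'] \in \mathfrak{L}(\bv - c\be^i,\bw)_{i,0}$ as a quotient of extension data. I fix a representative $(x',t')$ on $V' := V^{\bv-c\be^i}$ and identify $V^{\bv} = V' \oplus C$ with $C$ concentrated at vertex $i$ of dimension $c$; the $x$-invariant subspace $S = V'$ is then the canonical one, and because $\pi_2$ is an isomorphism every element of the fiber arises this way. Since $(x|_S, t|_S) = (x',t')$ and $\varepsilon_i(x',t') = 0$, the $x$-invariance of $S$ forces $x_a = x'_a$ (with image in $V'_i$) for every arrow $a$ with $t(a) = i$, and $t_j = t'_j$ for $j \ne i$. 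The genuinely new data are the restrictions $\tilde x_\beta := x_\beta|_C$ for arrows $\beta$ with $s(\beta) = i$ and $\tilde t_i := t_i|_C \in \Hom(C,W_i)$, which assemble into a single linear map
\[
  D : C \longrightarrow F := \ker M \oplus W_i,
\]
the moment map equation $\psi_i(x) = 0$ on $C$ being exactly the condition that the $\bigoplus V'_{s(\alpha)}$-component of $D$ lies in the kernel of the signed evaluation $M : \bigoplus_{\alpha:\, t(\alpha)=i} V'_{s(\alpha)} \to V'_i$, $(v_\alpha) \mapsto \sum_\alpha \epsilon(\alpha) x'_\alpha v_\alpha$.

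Next I would quotient by the residual stabilizer of $(x',t')$ inside $G_V$, which is the parabolic $\Hom(C, V'_i) \rtimes GL(C) \subseteq GL(V_i)$. A direct computation will show that $\xi \in \Hom(C, V'_i)$ acts by $D \mapsto D - G\xi$, where $G : V'_i \to F$ is the analogous map built from $(x',t')$, namely $v \mapsto \bigl((x'_\beta(v))_\beta,\, t'_i(v)\bigr)$. The crucial input is that $G$ is injective: otherwise the graded subspace $T' \subseteq V'$ with $T'_i = \ker G$ and $T'_j = 0$ would be a nonzero $x'$-invariant subspace annihilated by $t'$, contradicting the stability of $(x',t')$. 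Setting $F' := F/G(V'_i)$, the quotient by the unipotent radical identifies the space of extensions with $\Hom(C,F')$.

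Finally, I would verify that the stability of the extended $(x,t)$ translates into injectivity of the induced map $\bar D : C \to F'$. Given an $x$-invariant graded $T \subseteq V$ with $t(T) = 0$, stability of $(x',t')$ forces $T \cap V' = 0$, so $T$ is supported at vertex $i$ and projects injectively to $C$; unwinding $t(T) = 0$ and the $x$-invariance shows that this projection equals $\ker \bar D$. Hence stability of $(x,t)$ is equivalent to $\bar D$ being injective, and the remaining $GL(C)$-quotient of injective maps $C \hookrightarrow F'$ is, by definition, the grassmannian of $c$-dimensional subspaces of $F'$. The main obstacle in this plan is the residual group step: one must simultaneously check that $G$ lands in $F$ (which uses $\psi_i(x') = 0$) and that it is injective (which uses stability of $(x',t')$), but once those two facts are secured the grassmannian description drops out without further work.
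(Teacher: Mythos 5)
Your argument is correct and is essentially the proof given in the cited reference \cite[Lemma~4.2.2]{S02} (the paper itself supplies no proof beyond that citation): your space $F' = (\ker M \oplus W_i)/G(V'_i)$ is precisely the middle cohomology of the standard complex $V'_i \xrightarrow{G} \bigoplus_{a:\, t(a)=i} V'_{s(a)} \oplus W_i \xrightarrow{M} V'_i$, with injectivity of $G$ coming from stability of $(x',t')$ exactly as you say, and the fiber is then $\Gr(c, \dim F')$. The only cosmetic point is that reducing an arbitrary triple $(x,t,S)$ to the standard form $S = V'$, $(x|_S,t|_S)=(x',t')$ rests on the triviality of stabilizers of stable points (Lemma~\ref{S:lem:trival-stabilizer}) rather than on $\pi_2$ being an isomorphism, but you in fact use the correct ingredient when you identify the residual group as $\Hom(C,V'_i)\rtimes GL(C)$.
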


\begin{corollary} \label{S:cor:NQV-irrcomp-corr}
If $\mathfrak{L}(\bv,\bw)_{i,c} \ne \emptyset$, then there is a
one-to-one correspondence between the set of irreducible components
of $\mathfrak{L}(\bv-c\be^i,\bw)_{i,0}$ and the set of irreducible
components of $\mathfrak{L}(\bv,\bw)_{i,c}$.
\end{corollary}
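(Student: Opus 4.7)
The plan is to chain two bijections along the diagram
\[
\mathfrak{L}(\bv-c\be^i,\bw)_{i,0} \xleftarrow{\pi_1} \mathfrak{F}(\bv,\bw;c\be^i)_{i,0} \xrightarrow{\pi_2} \mathfrak{L}(\bv,\bw)_{i,c}
\]
set up just before Lemma~\ref{S:lem:fibers}. The half using $\pi_2$ is essentially free: the paragraph preceding the lemma notes that the restriction of $\pi_2$ to $\mathfrak{F}(\bv,\bw;c\be^i)_{i,0}$ is an isomorphism onto $\mathfrak{L}(\bv,\bw)_{i,c}$, and any isomorphism induces a bijection between irreducible components.

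For the $\pi_1$ half, I would invoke Lemma~\ref{S:lem:fibers}(2), which identifies each fiber of the restriction of $\pi_1$ with a grassmannian variety. Grassmannians are smooth and irreducible, and the dimensions of the relevant subspaces and quotients are determined entirely by $\bv$, $\bw$, $i$, and $c$, so the fibers are all irreducible and of the same dimension. Under these hypotheses one can apply the standard principle that for a surjective morphism with smooth, connected, equidimensional fibers the assignments $Z \mapsto \overline{\pi_1^{-1}(Z)}$ and $Y \mapsto \overline{\pi_1(Y)}$ are mutually inverse bijections between the irreducible components of $\mathfrak{L}(\bv-c\be^i,\bw)_{i,0}$ and those of $\mathfrak{F}(\bv,\bw;c\be^i)_{i,0}$. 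Composing the two bijections then gives the corollary.

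The step I expect to do the real work is ensuring that the fibrewise information in Lemma~\ref{S:lem:fibers} is strong enough to transport irreducibility. Concretely one needs that $\pi_1$ is surjective onto $\mathfrak{L}(\bv-c\be^i,\bw)_{i,0}$ and is Zariski-locally trivial over it, not merely that its set-theoretic fibers happen to be grassmannians. Surjectivity follows from the $\varepsilon_i=0$ condition defining $\mathfrak{L}(\bv-c\be^i,\bw)_{i,0}$, and local triviality can be verified by the same sort of explicit parameterization used for the analogous Lusztig statement in Lemma~5.2.3 of \cite{KS97}.
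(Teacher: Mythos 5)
Your proposal is correct and matches the paper's intended argument: the paper presents this as an immediate consequence of the fact that the restriction of $\pi_2$ to $\mathfrak{F}(\bv,\bw;c\be^i)_{i,0}$ is an isomorphism together with Lemma~\ref{S:lem:fibers}(2), exactly the two bijections you chain (and exactly parallel to how Corollary~\ref{S:cor:irred-comp-bijection} follows from Lemma~\ref{S:lem:fiber-bundles} in the Lusztig setting). Your added remarks on surjectivity and local triviality are the right points to check and are consistent with the cited references.
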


Let $B(\bv,\bw)$ be the set of irreducible components of
$\mathfrak{L}(\bv,\bw)$ and $B_g(\bw) = \bigsqcup_\bv B(\bv,\bw)$.
As in Section~\ref{S:sec:LQV}, for $X \in B(\bv,\bw)$, let
$\varepsilon_i(X)= \varepsilon_i((x,t))$ for a generic point $G_V
\cdot (x,t) \in X$. Then for $c \in \N$, define
\[
  B(\bv,\bw)_{i,c} = \{X \in B(\bv,\bw)\ |\ \varepsilon_i(X) = c\}.
\]
Thus, by Corollary~\ref{S:cor:NQV-irrcomp-corr} we have a bijective
correspondence
\[
  B(\bv-c\be^i, \bw)_{i,0} \cong B(\bv,\bw)_{i,c},\quad \bar X
  \leftrightarrow X.
\]
We then define maps
\begin{align*}
  \tilde f^c_i &: B(\bv-c\be^i,\bw)_{i,0} \to B(\bv,\bw)_{i,c},\quad
  \tilde f^c_i(\bar X)=X, \\
  \tilde e^c_i &: B(\bv,\bw)_{i,c} \to B(\bv-c\be^i,\bw)_{i,0},\quad
  \tilde e^c_i (X) = \bar X.
\end{align*}
Then we define
\[
  \tilde e_i,\ \tilde f_i : B(\bw) \to B(\bw) \sqcup \{0\}
\]
by
\begin{align*}
  \tilde e_i &: B(\bv,\bw)_{i,c} \xrightarrow{\tilde e^c_i} B(\bv -
  c\be^i, \bw)_{i,0} \xrightarrow{\tilde f^{c-1}_i} B(\bv-\be^i,\bw)_{i,c-1}, \\
  \tilde f_i &: B(\bv,\bw)_{i,c} \xrightarrow{\tilde e^c_i} B(\bv -
  c\be^i, \bw)_{i,0} \xrightarrow{\tilde f^{c+1}_i} B(\bv+\be^i,
  \bw)_{i, c+1}.
\end{align*}
We set $\tilde e_i(X) = 0$ for $X \in B(\bv,\bw)_{i,0}$ and $\tilde
f_i(X)=0$ for $X \in B(\bv,\bw)_{i,c}$ with $B(\bv,\bw)_{i,c+1} =
\emptyset$.  Furthermore, we define
\begin{gather*}
  \wt : B_g(\bw) \to P,\quad \wt X = \omega_\bw - \alpha_\bv \text{
  for } X \in B(\bv,\bw), \\
  \varphi_i(X) = \varepsilon_i(X) + \langle h_i, \wt X \rangle.
\end{gather*}
The following theorem was proven by Saito.

\begin{theorem}[{\cite[Theorem~4.6.4]{S02}}]
\label{S:thm:geometric-hw-crystal}
The definitions above endow $B_g(\bw)$ with the structure of a
$\g$-crystal and $B_g(\bw)$ is isomorphic to the crystal
$B(\omega_\bw)$ of the irreducible integrable highest weight
representation of highest weight $\omega_\bw$.
\end{theorem}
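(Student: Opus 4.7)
The plan is to verify the crystal axioms from the geometric definitions, locate the highest weight element, establish connectedness by induction on $|\bv|$, and then identify $B_g(\bw)$ with $B(\omega_\bw)$ by comparing it to the $B(\infty)$-realization of Theorem~\ref{S:thm:Binf-crystal}.

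First I would check that $(B_g(\bw), \tilde e_i, \tilde f_i, \wt, \varepsilon_i, \varphi_i)$ is a $\g$-crystal. The weight axiom $\wt(\tilde f_i X) = \wt(X) - \alpha_i$ is immediate from $\bv \mapsto \bv + \be^i$. The relation $\varphi_i(X) = \varepsilon_i(X) + \langle h_i, \wt X\rangle$ is the definition. The inversion $\tilde e_i \tilde f_i X = X$ whenever $\tilde f_i X \neq 0$ follows tautologically from the construction: both $\tilde e_i$ and $\tilde f_i$ factor through the bijection $B(\bv - c\be^i, \bw)_{i,0} \leftrightarrow B(\bv, \bw)_{i,c}$ coming from Corollary~\ref{S:cor:NQV-irrcomp-corr}, and passing through the common ``normal form'' in $B(\bv - c\be^i, \bw)_{i,0}$ makes $\tilde e_i$ and $\tilde f_i$ mutually inverse where defined. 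The shift $\varepsilon_i(\tilde e_i X) = \varepsilon_i(X) - 1$ is built into the definition, which sends $B(\bv,\bw)_{i,c}$ to $B(\bv - \be^i, \bw)_{i,c-1}$.

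Second, I would pin down the highest weight element. By Lemma~\ref{S:lem:fibers}(1), $\mathfrak{L}(\mathbf{0}, \bw)$ is a single point, so there is a unique element $X_0 \in B(\mathbf{0}, \bw)$, and the same lemma gives $\varepsilon_i(X_0) = 0$ for all $i$, hence $\tilde e_i X_0 = 0$ for all $i$. Its weight is $\omega_\bw$. Next, I would show $B_g(\bw)$ is generated from $X_0$ by the $\tilde f_i$, by induction on $|\bv| = \sum_i v_i$. Given $X \in B(\bv, \bw)$ with $\bv \neq 0$, the stability condition forces that some $V_i$ is not contained in the image of the maps feeding into it together with the kernel of $t_i$, which on a generic point of $X$ forces $\varepsilon_i(X) = c > 0$ for some $i \in Q_0$; then $\tilde e_i^c X \in B(\bv - c\be^i, \bw)_{i,0}$ has strictly smaller $|\bv|$, so by induction it lies in the $\tilde f_j$-orbit of $X_0$, and then $X = \tilde f_i^c \tilde e_i^c X$ does too.

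Third, I would identify $B_g(\bw)$ with $B(\omega_\bw)$. The cleanest route is via Kashiwara's characterization together with the embedding $B(\omega_\bw) \hookrightarrow B(\infty) \otimes T_{\omega_\bw}$. The forgetful map $(x, t) \mapsto x$ from $\Lambda(\bv, \bw)^\st$ to $\Lambda(\bv)$ is $G_V$-equivariant, and at the level of irreducible components induces a map $\iota \colon B_g(\bw) \to B_g(\infty) \otimes T_{\omega_\bw}$ sending $X$ to (the class of $x$ at a generic point of $X$) $\otimes t_{\omega_\bw}$. One checks that $\iota$ is strict: both sides use the same $\varepsilon_i$, defined as the dimension of the same cokernel, and both $\tilde e_i$-operations are defined by moving to the stratum $\varepsilon_i = 0$ via the same correspondences, so compatibility with $\tilde e_i, \tilde f_i$ follows from the compatibility of the correspondences $\pi_1, \pi_2$ with $p_1, p_2$ in Section~\ref{S:sec:LQV}. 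Injectivity of $\iota$ follows because a $G_V$-orbit of a stable point is determined by its underlying $x$ together with $\omega_\bw$ via the stability condition. Finally, its image consists exactly of those $b \otimes t_{\omega_\bw}$ satisfying the stability-equivalent condition $\varepsilon_i^*(b) \le \langle h_i, \omega_\bw\rangle$ for all $i$, which is the image of Kashiwara's embedding. Together with connectedness and the highest weight element of weight $\omega_\bw$, this gives $B_g(\bw) \cong B(\omega_\bw)$ by the uniqueness part of Kashiwara's characterization.

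The main obstacle will be the strictness of $\iota$ and the identification of the stability condition with the Kashiwara-embedding condition $\varepsilon_i^* \le \langle h_i, \omega_\bw\rangle$. This is where the delicate interplay between the moment map equation $\psi(x) = 0$, nilpotency, and the framing $t$ matters, and it rests on the detailed description of fibers promised in Exercise~\ref{S:ex:describe-fibers} and the grassmannian fiber structure of Lemma~\ref{S:lem:fibers}(2); this is the technical heart of Saito's argument in \cite{S02}.
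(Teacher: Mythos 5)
The paper does not prove this theorem; it is stated with a citation to Saito's original article \cite[Theorem~4.6.4]{S02}, so there is no in-text argument to compare against. Your outline is, in substance, the route Saito actually takes: establish the crystal axioms from the correspondences, exhibit the highest weight element at $\bv=\mathbf{0}$ via Lemma~\ref{S:lem:fibers}(1), and identify $B_g(\bw)$ inside $B_g(\infty)\otimes T_{\omega_\bw}\cong B(\infty)\otimes T_{\omega_\bw}$ (using Theorem~\ref{S:thm:Binf-crystal}) by showing that the components of $\Lambda(\bv)\times\bigoplus_i\Hom(V_i,W_i)$ admitting stable points are exactly those with $\varepsilon_i^*\le\langle h_i,\omega_\bw\rangle$, which is Kashiwara's characterization of the image of $B(\lambda)$ in $B(\infty)\otimes T_\lambda$. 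You correctly flag this stability-versus-$\varepsilon_i^*$ equivalence as the technical heart and defer it, which is the appropriate level of detail for a sketch of this kind.

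One micro-step in your connectedness argument is wrong as stated. You claim that for $\bv\neq\mathbf{0}$ the \emph{stability} condition forces $\varepsilon_i(X)>0$ for some $i$. But $\varepsilon_i(x,t)=\dim\Coker\bigl(\bigoplus_{a,\,t(a)=i}V_{s(a)}\to V_i\bigr)$ does not involve $t$ at all, and taking $S=V$ in the stability condition yields nothing since $t(V)$ need not vanish. The correct reason is \emph{nilpotency} of $x$ (automatic from the definition of $\Lambda(\bv)$, and implied by the moment map equation in finite type): if $\varepsilon_i(x)=0$ for all $i$, then each $V_i$ equals the sum of the images of the incoming maps, and iterating this $N$ times for $N$ beyond the nilpotency bound gives $V=0$. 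With that substitution your induction on $|\bv|$ goes through, since each $\varepsilon_i$ is upper semicontinuous and hence generically constant on each component.
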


\begin{remarks} \label{S:rem:rep-crystal}
\begin{enumerate}
  \item While Theorem~\ref{S:thm:geometric-hw-crystal} only applies
  to crystals with symmetric Cartan matrix, one can use a ``folding''
  procedure to produce a geometric construction of the crystals of
  highest weight representations in arbitrary symmetrizable type
  (see \cite{Sav04b}).

  \item Nakajima quiver varieties can be used to construct the full
  structure (as opposed to the crystal structure)
  of the irreducible integrable highest weight
  representations of a Kac-Moody algebra $\g$ with symmetric Cartan
  matrix.  The procedure is similar to the one described in
  \lecturecite{Section~\ref{J:se17}}{\cite[Section~4.3]{Kamnitzer}}.  One defines a
  smooth quiver variety $\mathfrak{M}$ and a singular quiver variety
  $\mathfrak{M}_0$ and there exists a resolution of singularities
  $\pi : \mathfrak{M} \to \mathfrak{M}_0$.  Then $\mathfrak{L} =
  \pi^{-1}(0)$ and we define $Z = \mathfrak{M} \times_\pi
  \mathfrak{M}$.  Then convolution gives the homology of $Z$ the
  structure of an algebra (isomorphic to a quotient of $U(\g)$) and
  defines an action of this algebra on the homology of
  $\mathfrak{L}$, realizing the representation in question.  For
  details, see \cite{Nak94,Nak98}.

  \item In type $A$ (i.e. when $\g=\mathfrak{sl}_n$), the quiver
  variety construction and the Ginzburg construction described in
  \lecturecite{Section~\ref{J:se14}}{\cite[Section~4]{Kamnitzer}} are closely related
  but slightly different.  See \cite{Sav04} for details.

  \item One can define quiver varieties which yield geometric
  realizations of tensor products of crystals of highest weight
  representations (see \cite{Mal02,Mal03,Nak01}).
\end{enumerate}
\end{remarks}

%
\section{Connections to combinatorial realizations of crystal
graphs} \label{S:sec:connections}
%

In this section, we will describe the precise relationship between
the combinatorial realizations of crystals given by tableaux and the
geometric realizations introduced in Section~\ref{S:sec:Nak-QV}. For
this section, we fix $\g = \mathfrak{sl}_{n+1}$ to be the simple Lie
algebra of type $A_n$.

Recall that the double quiver $Q$ associated to $\g$ is as follows.
\bigskip
\begin{center}
\begin{tikzpicture}
  \draw (-2,0) node {$Q$};
  \draw[->] node[label=270:1] {$\bullet$} (0,0) to[out=45,in=180]
  (0.5,0.3) node[label=90:{$\bar a_1$}] {};
  \draw (0.5,0.3) to[out=0,in=135] (1,0) node[label=270:2] {$\bullet$};
  \draw[->] (1,0) to[out=225,in=0] (0.5,-0.3) node[label=270:{$a_1$}] {};
  \draw (0.5,-0.3) to[out=180,in=-45] (0,0);
  \draw[->] (1,0) to[out=45,in=180] (1.5,0.3) node[label=90:{$\bar a_2$}] {};
  \draw (1.5,0.3) to[out=0,in=135] (2,0) node[label=270:3] {$\bullet$};
  \draw[->] (2,0) to[out=225,in=0] (1.5,-0.3) node[label=270:{$a_2$}] {};
  \draw (1.5,-0.3) to[out=180,in=-45] (1,0);
  \draw[->] (2,0) to[out=45,in=180] (2.5,0.3);
  \draw[>-] (2.5,-0.3) to[out=180,in=-45] (2,0);
  \draw (2.75,0) node {$\cdots$};
  \draw[>-] (3,0.3) to[out=0,in=135] (3.5,0) node {$\bullet$};
  \draw[->] (3.5,0) to[out=225,in=0] (3,-0.3);
  \draw[->] (3.5,0) to[out=45,in=180] (4,0.3) node[label=90:{$\bar a_{n-1}$}] {};
  \draw (4,0.3) to[out=0,in=135] (4.5,0) node[label=270:$n$] {$\bullet$};
  \draw[->] (4.5,0) to[out=225,in=0] (4,-0.3) node[label=270:{$a_{n-1}$}] {};
  \draw (4,-0.3) to[out=180,in=-45] (3.5,0);
\end{tikzpicture}
\end{center}
\bigskip
We choose the orientation $\Omega=\{a_1, a_2, \dots, a_{n-1}\}$
consisting of the left-pointing arrows and let $Q_\Omega$ be the
corresponding quiver containing only the arrows in $\Omega$:
\bigskip
\begin{center}
\begin{tikzpicture}
  \draw (-2,0) node {$Q_\Omega$};
  \draw[-<] (0,0) node {$\bullet$} node[below] {1} -- (0.5,0);
  \draw (0.5,0) -- (1,0);
  \draw[-<] (1,0) node {$\bullet$} node[below] {2} -- (1.5,0);
  \draw (1.5,0) -- (2,0);
  \draw[-<] (2,0) node {$\bullet$} node[below] {3} -- (2.5,0);
  \draw (2.5,0) -- (3,0) node {$\bullet$} node[below] {4};
  \draw (3,0) -- (3.5,0) node[right] {$\cdots$};
  \draw (4.5,0) -- (5,0);
  \draw[-<] (5,0) node {$\bullet$} -- (5.5,0);
  \draw (5.5,0) -- (6,0);
  \draw[-<] (6,0) node {$\bullet$} -- (6.5,0);
  \draw (6.5,0) -- (7,0) node {$\bullet$} node[below] {$n$};
\end{tikzpicture}
\end{center}
\bigskip

Let $V=\bigoplus_{i \in I} V_i$ be a $Q_0$-graded vector space of
graded dimension $\bdim V = \bv$. By Gabriel's Theorem
(Theorem~\ref{S:thm:Gabriel}) and the discussion in
Section~\ref{S:sec:quivers}, we have the following bijections.
\begin{center}
\begin{tabular}{rcccl}
  \parbox{2.5cm}{\begin{center} $G_V$-orbits in $E_{V,\Omega}$ \end{center}} &
  $\stackrel{\text{1-1}}{\longleftrightarrow}$ & \parbox{3cm}{\begin{center} Isom. classes of reps. of
  $Q_\Omega$ \end{center}} & $\stackrel{\text{1-1}}{\longleftrightarrow}$ & \parbox{4cm}{\begin{center} Collections of
  positive roots of $\g$ adding to $\sum_i v_i \alpha_i$
  \end{center}} \\ \\
  & & \parbox{3cm}{\begin{center} Isom. classes of indecomposable reps. \end{center}} &
  $\stackrel{\text{1-1}}{\longleftrightarrow}$ & \parbox{4cm}{\begin{center}
  positive roots of $\g$ \end{center}}
\end{tabular}
\end{center}
Recall that under these bijections, the positive root $\alpha_i +
\dots + \alpha_j$, $i \le j$, corresponds to the isomorphism class
of the representation $(V^{i,j}, x^{i,j})$ of
Example~\ref{S:eg:An-indecomps}.  We depict this representation by
the diagram
\bigskip
\begin{center}
\begin{tikzpicture}
  \draw (-2,0) node {$V^{i,j}$};
  \draw[-<] (0,0) node {$\bullet$} node[below] {$i$} node[above] {$\C$} -- (1,0) node[above] {1};
  \draw (1,0) -- (2,0);
  \draw[-<] (2,0) node {$\bullet$} node[below] {$i+1$} node[above] {$\C$} -- (3,0) node[above] {1};
  \draw (3,0) -- (4,0);
  \draw[-] (4,0) node {$\bullet$} node[below] {$i+2$} node[above] {$\C$} -- (5,0) node[right] {$\cdots$};
  \draw (6,0) -- (7,0);
  \draw[-<] (7,0) node {$\bullet$} node[below] {$j-1$} node[above] {$\C$} -- (8,0) node[above] {1};
  \draw (8,0) -- (9,0) node {$\bullet$} node[below] {$j$} node[above] {$\C$};
\end{tikzpicture}
\end{center}
\bigskip
or simply by
\bigskip
\begin{center}
\begin{tikzpicture}
  \draw (-2,0) node {$V^{i,j}$};
  \draw[-<] (0,0) node {$\bullet$} node[below] {$i$} -- (1,0) node[above] {1};
  \draw (1,0) -- (2,0);
  \draw[-<] (2,0) node {$\bullet$} -- (3,0) node[above] {1};
  \draw (3,0) -- (4,0);
  \draw[-] (4,0) node {$\bullet$} -- (5,0) node[right] {$\cdots$};
  \draw (6,0) -- (7,0);
  \draw[-<] (7,0) node {$\bullet$} -- (8,0) node[above] {1};
  \draw (8,0) -- (9,0) node {$\bullet$} node[below] {$j$};
\end{tikzpicture}.
\end{center}
\bigskip
In such pictures, each vertex represents a basis vector in the
representation, and we identify the two. Vertex labels (elements of
$Q_0$) indicate the degree in which this vector lives and arrow
labels indicate the action of $x$ ($=x^{i,j}$ here). Thus an arrow
labeled by $z \in \C$ from a vertex $a$ to a vertex $b$ indicates
that the coefficient of $b$ in the expansion of $x(a)$ as a linear
combination of the vertices is $z$. We will always vertically line
up vertices of the same degree and degrees will increase from left
to right.

Recall that in finite type, the nilpotency condition in the
definition of the Lusztig quiver varieties is superfluous and thus
\[
  \Lambda(\bv) = \{x \in E_V = E_{V, \Omega} \oplus E_{V, \bar
  \Omega} \ |\ \psi(x) = 0\},
\]
where $\psi(x)=0$ is the moment map condition.  In our case, the
moment map condition is equivalent to the following set of
conditions:
\begin{align}
  x_{a_i} x_{\bar a_i} &= x_{\bar a_{i-1}} x_{a_{i-1}},\quad i \ne
  1,n, \label{S:eq:MMC-An-middle} \\
  x_{a_1} x_{\bar a_1} &= 0, \label{S:eq:MMC-An-1} \\
  x_{\bar a_{n-1}} x_{a_{n-1}} &=0. \label{S:eq:MMC-An-n}
\end{align}
We adopt the convention that $x_{a_i} = 0$ and $x_{\bar a_i}=0$ for
$i \le 0$ or $i \ge n$.

We know that the irreducible components of $\Lambda(\bv)$ are the
closures of the conormal bundles to the orbits in $E_{V,\Omega}$ and
that $E_{V, \bar \Omega}$ corresponds to the cotangent direction.
Suppose we consider an orbit corresponding to a single positive
root. More precisely, we consider the orbit through the point
$(V^{i,j}, x^{i,j})$ for some $i \le j$.  We would like to describe
the conormal bundle to this orbit. Because the group $G_V$ acts
transitively on the orbit and identifies the fibers over points, it
suffices to describe the fiber of the conormal bundle over a
particular point, namely $(V^{i,j}, x^{i,j})$.  We have seen that
this fiber is given by the set
\[
  \{x=(x^{i,j}, x_{\bar \Omega} = (x_{\bar a_k})_{1 \le k \le n-1}) \in E_V\ |\ \psi(x) = 0\}.
\]
Suppose $x=(x^{i,j}, x_{\bar \Omega} = (x_{\bar a_k})_{1 \le k \le
n-1})$ is in the fiber.  Since $V^{i,j}_k=0$ unless $i \le k \le j$,
we have $x_{\bar a_k}=0$ unless $i \le k \le j-1$.  Therefore, by
\eqref{S:eq:MMC-An-middle} (if $i > 1$) or \eqref{S:eq:MMC-An-1} (if
$i=1$), we have $x_{a_i} x_{\bar a_i} = 0$.  But $x_{a_i}=1$ and so
$x_{\bar a_i}=0$.  Again, one can use \eqref{S:eq:MMC-An-middle} to
show that $x_{\bar a_{i+1}}=0$.  Continuing in this manner, we see
that in fact $x_{\bar \Omega}=0$.  Therefore, the conormal bundle to
the orbit is just the orbit itself (i.e. each fiber in the conormal
bundle consists of a single point).

Now consider an orbit corresponding to 2 positive roots. More
precisely, we consider the orbit through the point $x^{i_1,j_1}
\oplus x^{i_2,j_2}$ for some $i_1 \le j_1$ and $i_2 \le j_2$.  We
picture this representation as in
Figure~\ref{S:fig:sum-of-two-reps}.

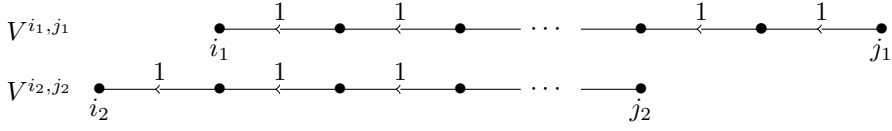
\begin{figure}
\begin{center}
\begin{tikzpicture}[scale=0.8]
  \draw (-3,0) node {$V^{i_1,j_1}$};
  \draw[-<] (0,0) node {$\bullet$} node[below] {$i_1$} -- (1,0) node[above] {1};
  \draw (1,0) -- (2,0);
  \draw[-<] (2,0) node {$\bullet$} -- (3,0) node[above] {1};
  \draw (3,0) -- (4,0);
  \draw[-] (4,0) node {$\bullet$} -- (5,0) node[right] {$\cdots$};
  \draw (6,0) -- (7,0);
  \draw[-<] (7,0) node {$\bullet$} -- (8,0) node[above] {1};
  \draw (8,0) -- (9,0) node {$\bullet$};
  \draw[-<] (9,0) node {$\bullet$} -- (10,0) node[above] {1};
  \draw (10,0) -- (11,0) node {$\bullet$} node[below] {$j_1$};
  \draw (-3,-1) node {$V^{i_2,j_2}$};
  \draw[-<] (-2,-1) node {$\bullet$} node[below] {$i_2$} -- (-1,-1) node[above] {1};
  \draw (-1,-1) -- (0,-1);
  \draw[-<] (0,-1) node {$\bullet$} -- (1,-1) node[above] {1};
  \draw (1,-1) -- (2,-1);
  \draw[-<] (2,-1) node {$\bullet$} -- (3,-1) node[above] {1};
  \draw (3,-1) -- (4,-1);
  \draw[-] (4,-1) node {$\bullet$} -- (5,-1) node[right] {$\cdots$};
  \draw (6,-1) -- (7,-1) node {$\bullet$} node[below] {$j_2$};
\end{tikzpicture}
\end{center}
\caption{The quiver representation $x^{i_1,j_1} \oplus x^{i_2,j_2}$.
Each vertex represents a basis vector in $V^{i_1,j_1} \oplus
V^{i_2,j_2}$.  Vertex labels indicate the degree of the
corresponding basis vector and vertices of the same degree are
vertically aligned.  Edges indicate the action of $x^{i_1,j_1}
\oplus x^{i_2,j_2}$ with coefficients given by edge labels.
\label{S:fig:sum-of-two-reps}}
\end{figure}

We now wish to describe the conormal bundle to this orbit. Again, it
suffices to describe the fiber of the conormal bundle over a
particular point, namely $(V^{i_1,j_1} \oplus V^{i_2, j_2},
x^{i_1,j_1} \oplus x^{i_2,j_2})$, and this fiber is given by the set
\[
  \{x=(x^{i_1,j_1} \oplus x^{i_2,j_2}, x_{\bar \Omega}) \in E_V\ |\
  \psi(x) = 0\}.
\]
For $l=1,2$, let $V^{i_l,j_l}_k = \C v^l_k$, $i_l \le k \le j_l$,
such that $x^{i,j}_{a_k}(v^l_{k+1}) = v^l_k$ for $i_l \le k \le
j_l-1$. Set $v^l_k=0$ for $k < i_l$ or $k>j_l$.  What are the
possible values of $x_{\bar a_k}(v^2_k)$ for $i_2 \le k \le j_2$? In
general, $x_{\bar a_k}(v^2_k)$ is a linear combination of
$v^1_{k+1}$ and $v^2_{k+1}$. However, by the same argument as above,
one can show that the coefficient of $v^2_{k+1}$ must in fact be
zero.  Therefore
\[
  x_{\bar a_k}(v^2_k) = c_k v^1_{k+1},
\]
for some $c_k \in \C$.  Suppose that $i_1 \le k+1 \le j_1$ and $c_k
\ne 0$.  Then if $k+1 > j_2$, by \eqref{S:eq:MMC-An-middle} we have
\[
  x_{\bar a_{k-1}} (v^2_{k-1}) = x_{\bar a_{k-1}} x_{a_{k-1}}
  (v^2_k) = x_{a_k} x_{\bar a_k} (v^2_k) = c_k x_{a_k}(v^1_{k+1}) = c_k v^1_k \ne 0.
\]
Continuing in this manner, we see that $x_{\bar
a_{i_1-1}}(v^2_{i_1-1}) \ne 0$ and thus $i_2 < i_1$.

Now, if $k+1 \le j_2$, then
\[
  x_{a_{k+1}} x_{\bar a_{k+1}} (v^2_{k+1}) = x_{\bar a_k} x_{a_k}
  (v^2_{k+1}) = x_{\bar a_k} (v^2_k) \ne 0.
\]
Therefore $x_{\bar a_{k+1}} (v^2_{k+1}) \ne 0$.  But $x_{\bar
a_{k+1}} (v^2_{k+1})$ must be a multiple of $v^1_{k+2}$ as above.
Thus, we must have $k+2 \le j_1$. Continuing in this way, we see
that $j_2 < j_1$. Therefore, in order for $x_{\bar a_k}(v^2_k)$ to
be nonzero for any $k$, we must have $i_2 < i_1$ and $j_2 < j_1$
(for instance, we could have the situation pictured in
Figure~\ref{S:fig:sum-of-two-reps}).

Now, let $x=(x_\Omega, x_{\bar \Omega})$ lie in the conormal bundle
to the point
\[
  x_\Omega = \bigoplus_{l=1}^s x^{i_l, j_l}.
\]
By reordering the indices if necessary, we can assume that $i_1 \ge
i_2 \ge \dots \ge i_s$.  As above, let $V_k^{i_l,j_l} = \C v_k^l$,
for $1 \le l \le s$.  By the above arguments, $x_{\bar a_k}(v^l_k)$
must be a linear combination of $\{v^a_{k+1}\}_{a < l}$.  Thus,
\begin{equation} \label{S:eq:vector-in-kernel}
  v^1_{i_1} \in \ker x_{a_{i_1-1}} \cap \ker x_{\bar a_{i_1}}.
\end{equation}

\begin{exercise} \label{S:ex:stability}
Show that a point $(x,t) \in \Lambda(\bv,\bw)$ satisfies the
stability condition if and only if
\[
  \ker x_{a_{k-1}} \cap \ker x_{\bar a_k} \cap \ker t_k = 0 \quad
  \forall\ 1 \le k \le n.
\]
\end{exercise}

Now consider the lagrangian Nakajima quiver variety
$\mathfrak{L}(\bv,\bw)$ for $\bw=\be^r$ for some $1 \le r \le n$.
Suppose a point $(x,t)$ satisfies the stability condition where $x$
is in the conormal bundle to the point $x_\Omega = \bigoplus_{l=1}^s
x^{i_l, j_l}$.  By Exercice~\ref{S:ex:stability} and
\eqref{S:eq:vector-in-kernel}, we must have $i_1 = r$ and there can
be no other $v^l_k$ in $\ker x_{a_{k-1}} \cap \ker x_{\bar a_k}$ for
any $k$.  By the above considerations, $v^l_{i_l}$ is in $\ker
x_{a_{i_l-1}} \cap \ker x_{\bar a_{i_l}}$ unless $i_l+1=i_{l-1}$.
Thus we have $i_{l+1}=i_l-1$ and $x_{\bar a_{i_{l+1}}} \left(
v^{l+1}_{i_{l+1}} \right) = c_l v^l_{i_l} \ne 0$ for $1 \le l \le
s-1$. Thus, by the above, we must have $j_{l+1} < j_l$ for $1 \le l
\le s-1$.  Thus, the element $x$ can be depicted as in
Figure~\ref{S:fig:x-YD}.

\begin{figure}
\begin{center}
\begin{tikzpicture}
  \draw (-1,0) node {$Q_\Omega$};
  \draw (0,0) node {$\bullet$} node[left] {$\dots$} to (0.5,0);
  \draw (0.5,0) to[<-] (1,0);
  \draw (1,0) node {$\bullet$} node[above] {$r \! - \! 2$} to (1.5,0);
  \draw (1.5,0) to[<-] (2,0);
  \draw (2,0) node {$\bullet$} node[above] {$r \! - \! 1$} to (2.5,0);
  \draw (2.5,0) to[<-] (3,0);
  \draw (3,0) node {$\bullet$} node[above] {$r$} to (3.5,0);
  \draw (3.5,0) to[<-] (4,0);
  \draw (4,0) node {$\bullet$} node[above] {$r \! +\! 1$} to (4.5,0);
  \draw (4.5,0) to[<-] (5,0);
  \draw (5,0) node {$\bullet$} node[above] {$r \! + \! 2$} to (5.5,0);
  \draw (5.5,0) to[<-] (6,0);
  \draw (6,0) node {$\bullet$} to (6.5,0);
  \draw (6.5,0) to[<-] (7,0);
  \draw (7,0) node {$\bullet$} to (7.5,0);
  \draw (7.5,0) to[<-] (8,0) node {$\bullet$} node[right] {$\cdots$};
  \draw (3,-1) node {$\bullet$} to (3.5,-1);
  \draw (3.5,-1) to[<-] (4,-1);
  \draw (4,-1) node {$\bullet$} to (4.5,-1);
  \draw (4.5,-1) to[<-] (5,-1);
  \draw (5,-1) node {$\bullet$} to (5.5,-1);
  \draw (5.5,-1) to[<-] (6,-1);
  \draw (6,-1) node {$\bullet$} to (6.5,-1);
  \draw (6.5,-1) to[<-] (7,-1);
  \draw (7,-1) node {$\bullet$} to (7.5,-1);
  \draw (7.5,-1) to[<-] (8,-1) node {$\bullet$};
  \draw (2,-2) node {$\bullet$} to (2.5,-2);
  \draw (2.5,-2) to[<-] (3,-2);
  \draw (3,-2) node {$\bullet$} to (3.5,-2);
  \draw (3.5,-2) to[<-] (4,-2);
  \draw (4,-2) node {$\bullet$} to (4.5,-2);
  \draw (4.5,-2) to[<-] (5,-2);
  \draw (5,-2) node {$\bullet$} to (5.5,-2);
  \draw (5.5,-2) to[<-] (6,-2) node {$\bullet$};
  \draw (1,-3) node {$\bullet$} to (1.5,-3);
  \draw (1.5,-3) to[<-] (2,-3);
  \draw (2,-3) node {$\bullet$} to (2.5,-3);
  \draw (2.5,-3) to[<-] (3,-3) node {$\bullet$};
  \draw (0,-4) node {$\bullet$};
  \draw (0,-4) to[->] (0.5,-3.5);
  \draw (0.5,-3.5) to (1,-3);
  \draw (1,-3) to[->] (1.5,-2.5);
  \draw (1.5,-2.5) to (2,-2);
  \draw (2,-2) to[->] (2.5,-1.5);
  \draw (2.5,-1.5) to (3,-1);
  \draw (2,-3) to[->] (2.5,-2.5);
  \draw (2.5,-2.5) to (3,-2);
  \draw (3,-2) to[->] (3.5,-1.5);
  \draw (3.5,-1.5) to (4,-1);
  \draw (3,-3) to[->] (3.5,-2.5);
  \draw (3.5,-2.5) to (4,-2);
  \draw (4,-2) to[->] (4.5,-1.5);
  \draw (4.5,-1.5) to (5,-1);
  \draw (5,-2) to[->] (5.5,-1.5);
  \draw (5.5,-1.5) to (6,-1);
  \draw (6,-2) to[->] (6.5,-1.5);
  \draw (6.5,-1.5) to (7,-1);
\end{tikzpicture}
\end{center}
\caption{A depiction of an element $x$ of a point $(x,t) \in
\Lambda(\bv,\be^r)$ satisfying the stability condition.
\label{S:fig:x-YD}}
\end{figure}
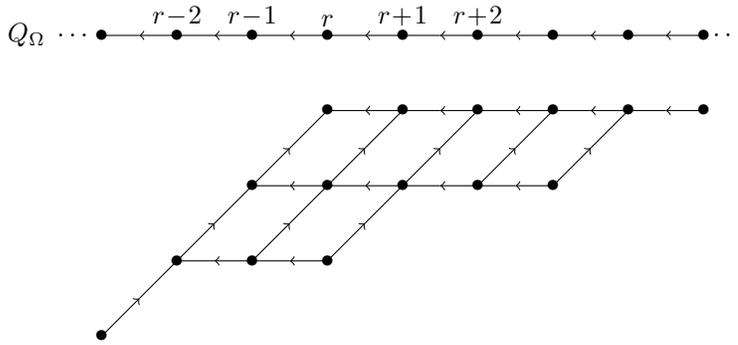

In the tableaux realization of $B(\Lambda_k)$ (see \cite{HK}), the
vertices of the crystal graph consist of single column semistandard
tableaux with $k$ rows and entries from the set $\{1,2, \dots,
n+1\}$.  Note that there is a one-to-one correspondence between this
set of tableaux and the set of Young diagrams with at most $k$ rows
and whose rows have at most $n+1-k$ boxes.  Precisely, the
semistandard tableau
\begin{equation} \label{S:single-column-young-tableaux}
  \parbox{1cm}{\young(\None,\Ntwo,\vdots,\nk)}
\end{equation}
corresponds to the Young diagram (or partition) $(n_k-k,n_{k-1}-1,
\dots, n_1-1)$.  It follows that we have a bijective correspondence
between $B(\omega_k)$ and the set of conormal bundles satisfying the
stability condition, where the tableaux
\eqref{S:single-column-young-tableaux} corresponds to the closure of
the conormal bundle to the orbit through the representation
\[ \textstyle
  \left( \bigoplus_{i=1}^k V^{k+1-i,n_i-1}, \bigoplus_{i=1}^k x^{k+1-i,n_i-1} \right).
\]
Loosely speaking, we have the correspondence

\begin{center}
\begin{tabular}{ccc}
  $\young(j) \quad$ \text{in the $i$th row } & $\quad \longleftrightarrow
  \quad $ & \parbox{5cm}{
  \begin{tikzpicture}[scale=0.5]
  \draw[-<] (0,0) node {$\bullet$} node[below] {$i$} -- (1,0) node[above] {1};
  \draw (1,0) -- (2,0);
  \draw[-<] (2,0) node {$\bullet$} -- (3,0) node[above] {1};
  \draw (3,0) -- (4,0);
  \draw[-] (4,0) node {$\bullet$} -- (5,0) node[right] {$\cdots$};
  \draw (6,0) -- (7,0);
  \draw[-<] (7,0) node {$\bullet$} -- (8,0) node[above] {1};
  \draw (8,0) -- (9,0) node {$\bullet$} node[below] {$j-1$};
  \end{tikzpicture}}
\end{tabular}.
\end{center}

\begin{example} \label{S:eg:diagram-tableaux}
Suppose $\g = \mathfrak{sl}_{10}$ and $k=4$.  Then we have the
following correspondence.
\[
\begin{tikzpicture}
  \draw (-1,0) node {$Q_\Omega$};
  \draw (0,0) node {$\bullet$} node[above] {1} to (0.5,0);
  \draw (0.5,0) to[<-] (1,0);
  \draw (1,0) node {$\bullet$} node[above] {2} to (1.5,0);
  \draw (1.5,0) to[<-] (2,0);
  \draw (2,0) node {$\bullet$} node[above] {3} to (2.5,0);
  \draw (2.5,0) to[<-] (3,0);
  \draw (3,0) node {$\bullet$} node[above] {4} to (3.5,0);
  \draw (3.5,0) to[<-] (4,0);
  \draw (4,0) node {$\bullet$} node[above] {5} to (4.5,0);
  \draw (4.5,0) to[<-] (5,0);
  \draw (5,0) node {$\bullet$} node[above] {6} to (5.5,0);
  \draw (5.5,0) to[<-] (6,0);
  \draw (6,0) node {$\bullet$} node[above] {7} to (6.5,0);
  \draw (6.5,0) to[<-] (7,0);
  \draw (7,0) node {$\bullet$} node[above] {8} to (7.5,0);
  \draw (7.5,0) to[<-] (8,0) node {$\bullet$} node[above] {9};
  \draw (3,-1) node {$\bullet$} to (3.5,-1);
  \draw (3.5,-1) to[<-] (4,-1);
  \draw (4,-1) node {$\bullet$} to (4.5,-1);
  \draw (4.5,-1) to[<-] (5,-1);
  \draw (5,-1) node {$\bullet$} to (5.5,-1);
  \draw (5.5,-1) to[<-] (6,-1);
  \draw (6,-1) node {$\bullet$} to (6.5,-1);
  \draw (6.5,-1) to[<-] (7,-1);
  \draw (7,-1) node {$\bullet$} to (7.5,-1);
  \draw (7.5,-1) to[<-] (8,-1) node {$\bullet$};
  \draw (2,-2) node {$\bullet$} to (2.5,-2);
  \draw (2.5,-2) to[<-] (3,-2);
  \draw (3,-2) node {$\bullet$} to (3.5,-2);
  \draw (3.5,-2) to[<-] (4,-2);
  \draw (4,-2) node {$\bullet$} to (4.5,-2);
  \draw (4.5,-2) to[<-] (5,-2);
  \draw (5,-2) node {$\bullet$} to (5.5,-2);
  \draw (5.5,-2) to[<-] (6,-2) node {$\bullet$};
  \draw (1,-3) node {$\bullet$} to (1.5,-3);
  \draw (1.5,-3) to[<-] (2,-3);
  \draw (2,-3) node {$\bullet$} to (2.5,-3);
  \draw (2.5,-3) to[<-] (3,-3) node {$\bullet$};
  \draw (1,-3) to[->] (1.5,-2.5);
  \draw (1.5,-2.5) to (2,-2);
  \draw (2,-2) to[->] (2.5,-1.5);
  \draw (2.5,-1.5) to (3,-1);
  \draw (2,-3) to[->] (2.5,-2.5);
  \draw (2.5,-2.5) to (3,-2);
  \draw (3,-2) to[->] (3.5,-1.5);
  \draw (3.5,-1.5) to (4,-1);
  \draw (3,-3) to[->] (3.5,-2.5);
  \draw (3.5,-2.5) to (4,-2);
  \draw (4,-2) to[->] (4.5,-1.5);
  \draw (4.5,-1.5) to (5,-1);
  \draw (5,-2) to[->] (5.5,-1.5);
  \draw (5.5,-1.5) to (6,-1);
  \draw (6,-2) to[->] (6.5,-1.5);
  \draw (6.5,-1.5) to (7,-1);
  \draw (10,-0.5) to (10,-4.5) to (11,-4.5) to (11,-0.5) -- cycle;
  \draw (10,-1.5) to (11,-1.5);
  \draw (10,-2.5) to (11,-2.5);
  \draw (10,-3.5) to (11,-3.5);
  \draw (10.5,-1) node {10};
  \draw (10.5,-2) node {8};
  \draw (10.5,-3) node {5};
  \draw (10.5,-4) node {1};
  \draw[<->] (8.5,-1) to (9.5,-1);
  \draw[<->] (6.5,-2) to (9.5,-2);
  \draw[<->] (3.5,-3) to (9.5,-3);
  \draw[<->] (0.5,-4) to (9.5,-4);
\end{tikzpicture}
\]
Here we draw the tableau upside down in order to make the
correspondence clearer. The tableau
\newcommand{\ten}{10}
\[
  T = \parbox{1cm}{\young(1,5,8,\ten)}
\]
then corresponds to the irreducible component of the lagrangian
Nakajima quiver variety obtained as follows: Let $C_T$ be the
closure of the conormal bundle to the $G_V$-orbit through
\[
  x^{4,9} \oplus x^{3,7} \oplus x^{2,4}.
\]
Then $T$ corresponds to the irreducible component
\[
  \left( \left( C_T \times \sum_{i=1}^n \Hom(V_i,W_i) \right) \cap
  \Lambda(\bv,\bw)^\st \right) / G_V.
\]
In fact, one can show that the Nakajima quiver varieties
$\Lambda(\be^k)$ corresponding to fundamental weights $\omega_k$ are
all single points (this can be shown directly or by using the
dimension formula for Nakaijima quiver varieties and the fact that
these varieties are connected).
\end{example}

What about more general highest weights (that is, highest weights
that are not fundamental weights)?  Consider the highest weight
$\omega_\bw = \sum_{i \in Q_0} w_i \omega_i$.  Then an argument
similar to the above demonstrates that we can have $w_i$ Young
diagrams associated to each vertex $i$.  In order to avoid double
counting, we must associate each irreducible component of the
lagrangian Nakajima quiver variety to a unique such collection of
Young diagrams.  We adopt the convention that larger indecomposable
representations of $Q_\Omega$ (corresponding to the rows of the
Young diagrams) are associated to the Young diagrams attached to
vertices of higher degree (see \cite{Sav03b} for a more precise
treatment). Each Young diagram corresponds to a single column
tableaux as above and, if we organize these columns into a tableau,
our convention corresponds precisely to the condition that this
tableau be semistandard.  We thus obtain a bijective map from the
set $B_g(\bw)$ of irreducible components of the lagrangian Nakajima
quiver variety to the set $B(\omega_w)$ of semistandard tableaux.

\begin{theorem}[{\cite[Theorem~6.4]{Sav03b}}]
\label{S:thm:geom-crystal}
The identification of $B_g(\bw)$ with $B(\omega_w)$ described above
is an isomorphism of crystals.
\end{theorem}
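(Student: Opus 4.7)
The plan is to exploit uniqueness of the connected highest weight crystal of a given dominant weight. By Theorem~\ref{S:thm:geometric-hw-crystal}, $B_g(\bw)$ is isomorphic as an $\mathfrak{sl}_{n+1}$-crystal to $B(\omega_\bw)$, and classically the tableau crystal $B(\omega_w)$ realizes the same abstract crystal. A morphism between two connected highest weight crystals of the same highest weight is determined by its value on the highest weight vector, so it suffices to show that the bijection constructed just before the statement preserves weights, sends the geometric highest weight element to the highest weight tableau, and intertwines the lowering operators $\tilde f_i$ for every $i \in Q_0$.

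The first two conditions are essentially bookkeeping. An irreducible component in $B(\bv, \bw)$ has weight $\omega_\bw - \sum_i v_i \alpha_i$ by definition of $\wt$, and a direct count via the correspondence ``a box with entry $j$ in row $i$ $\leftrightarrow$ the summand $V^{i, j-1}$'' shows that the associated tableau has the same weight. The geometric highest weight element is the unique point of $\mathfrak{L}(\mathbf{0}, \bw)$ given by Lemma~\ref{S:lem:fibers}(1); under the correspondence it is sent to the collection of empty Young diagrams, which assembles into the canonical highest weight tableau of shape $\lambda(\omega_w)$.

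The substance of the proof lies in compatibility with $\tilde f_i$. Fix $X \in B(\bv, \bw)_{i,c}$ represented by the closure of the conormal bundle to the $G_V$-orbit through $\bigoplus_l x^{i_l, j_l}$. By Lemma~\ref{S:lem:fibers}(2) and Corollary~\ref{S:cor:NQV-irrcomp-corr}, the geometric operator $\tilde f_i = \tilde f_i^{c+1} \circ \tilde e_i^c$ modifies this decomposition at vertex $i$ in a specific way controlled by the grassmannian fibers of $\pi_1$ and the stability analysis preceding the theorem. On the combinatorial side, $\tilde f_i$ acts by the bracket-matching signature rule, converting a single unmatched entry from $i$ to $i+1$. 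I would close the proof by a column-by-column check that these two prescriptions select the same summand, appealing to the higher-vertex ordering convention in order to translate between the two descriptions.

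The principal obstacle is precisely this last verification. The bracket-matching rule is a global statement about the distribution of $i$ and $i+1$ entries across all columns of the tableau, whereas the geometric operation is the local modification of a single indecomposable summand. Reducing the former to the latter depends critically on the higher-vertex ordering convention (introduced to avoid double-counting of irreducible components): it is exactly this convention that aligns the two selection rules and that forces the resulting tableau to remain semistandard after applying $\tilde f_i$. The detailed combinatorial matching is carried out in~\cite[Theorem~6.4]{Sav03b}.
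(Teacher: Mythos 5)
The paper itself offers no argument for this theorem; it is quoted verbatim from \cite[Theorem~6.4]{Sav03b}, so there is no internal proof to compare against. Measured against that, your outline is a reasonable reconstruction of how the cited proof is structured: check weights, check the highest weight element, and verify compatibility with the $\tilde f_i$. Your bookkeeping steps are correct --- the box-with-entry-$j$-in-row-$i$ $\leftrightarrow$ $V^{i,j-1}$ dictionary does make the weights match (the box changes $\varepsilon_i$ to $\varepsilon_j$ while the summand $V^{i,j-1}$ contributes $\alpha_i+\dots+\alpha_{j-1}=\varepsilon_i-\varepsilon_j$ to $\alpha_\bv$), and Lemma~\ref{S:lem:fibers}(1) does identify the geometric highest weight element with the empty collection of diagrams.

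The gap is the one you yourself name: the ``column-by-column check that these two prescriptions select the same summand'' is not a closing formality but the entire mathematical content of the theorem, and you assert rather than prove it, deferring to the same reference the paper cites. Concretely, one must show that for a generic point of a component labelled by segments $\{[i_l,j_l]\}$, the value of $\varepsilon_i$ (the corank of $\bigoplus_{t(a)=i} V_{s(a)} \to V_i$) computed from the conormal-bundle description equals the number of unmatched $i$'s in the signature rule, and that the segment lengthened by the geometric $\tilde f_i$ is exactly the one corresponding to the rightmost unmatched $i$; neither of these follows from anything stated in the lecture notes. A secondary point: the appeal to uniqueness of morphisms between connected highest weight crystals does not reduce this burden. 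Once you have shown that the bijection sends highest weight element to highest weight element and intertwines every $\tilde f_i$, it is already an isomorphism; uniqueness only spares you the separate verification of $\tilde e_i$, $\varepsilon_i$ and $\varphi_i$ (which are determined by the $i$-string structure), so it is an economy of presentation rather than a genuine shortcut through the crux.
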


\begin{exercise} \label{S:ex:small-tableaux}
If $\g = \mathfrak{sl}_3$, $\bw = e_1 + e_2$ (so $V(\omega_\bw)$ is
the adjoint representation) and $\bv=e_1 + e_2$ (so
$V(\omega_\bw)_{\omega_\bw - \alpha_\bv} = \mathfrak{h}$ is the zero
weight space), we described the lagrangian Nakajima quiver variety
$\mathfrak{L}(\bv,\bw)$ in Example~\ref{S:eg:NQV-sl3-adjoint}.  It
consists of two projective lines meeting at a point.  Show that the
projective line given by $x_a=0$ corresponds to the tableau
\[
  \young(12,3)
\]
and the projective line given by $\bar x_a = 0$ corresponds to the
tableau
\[
  \young(13,2).
\]
\end{exercise}

\begin{remarks} \label{S:rem:other-types}
\begin{enumerate}
  \item When $\g=\mathfrak{so}_{2n}$ is the simple Lie algebra of type $D_n$,
  a similar explicit correspondence between the irreducible
  components of the lagrangian Nakajima quiver varieties and the tableaux
  appearing in a combinatorial realization of the irreducible
  highest weight representations of $\g$ (see \cite[\S~8.5]{HK} for a description
  of the tableaux appearing in this model) can
  also be given.   However, this description, as well as the tableaux
  appearing in the combinatorial realization, are slightly more
  complicated.  See \cite{Sav03b} for details.

  \item When $\g=\widehat{\mathfrak{sl}}_n$ is the Lie algebra of
  affine type $A$ (see Example~\ref{S:eg:affine-An-quiver}),
  one can
  give a similar explicit enumeration of the irreducible components
  of the lagrangian Nakajima quiver variety.  However, now the ``strings''
  representing indecomposable representations of the quiver
  $Q_\Omega$ can ``wrap around'' the quiver and thus have arbitrary
  length.  For level one representations, we get pictures as in
  Figure~\ref{S:fig:level-one-affine-pic}.
  \begin{figure}
  \begin{center}
  \begin{tikzpicture}
    \draw[-<] (0,0) node {$\bullet$} node[above] {$k$} -- (0.5,0);
    \draw (0.5,0) -- (1,0);
    \draw[-<] (1,0) node {$\bullet$} node[above] {$k\!+\!1$} -- (1.5,0);
    \draw (1.5,0) -- (2,0);
    \draw[-<] (2,0) node {$\bullet$} node[above] {$k\!+\!2$} -- (2.5,0);
    \draw (2.5,0) -- (3,0) node {$\bullet$};
    \draw (3,0) -- (3.5,0) node[right] {$\cdots$};
    \draw (4.5,0) -- (5,0);
    \draw[-<] (5,0) node {$\bullet$} node[above] {$n$} -- (5.5,0);
    \draw (5.5,0) -- (6,0);
    \draw[-<] (6,0) node {$\bullet$} node[above] {0} -- (6.5,0);
    \draw (6.5,0) -- (7,0) node {$\bullet$} node[above] {1};
    \draw[-<] (-1,-1) node {$\bullet$} -- (-0.5,-1);
    \draw (-0.5,-1) -- (0,-1);
    \draw[-<] (0,-1) node {$\bullet$} -- (0.5,-1);
    \draw (0.5,-1) -- (1,-1);
    \draw[-<] (1,-1) node {$\bullet$} -- (1.5,-1);
    \draw (1.5,-1) -- (2,-1) node {$\bullet$};
    \draw (2,-1) -- (2.5,-1) node[right] {$\cdots$};
    \draw (3.5,-1) -- (4,-1) node {$\bullet$};
    \draw[-<] (-2,-2) node {$\bullet$} -- (-1.5,-2);
    \draw (-1.5,-2) -- (-1,-2);
    \draw[-<] (-1,-2) node {$\bullet$} -- (-0.5,-2);
    \draw (-0.5,-2) -- (0,-2) node {$\bullet$};
    \draw[->] (-2,-2) -- (-1.5,-1.5);
    \draw (-1.5,-1.5) -- (-1,-1);
    \draw[->] (-1,-1) -- (-0.5,-0.5);
    \draw (-0.5,-0.5) -- (0,0);
    \draw[->] (-1,-2) -- (-0.5,-1.5);
    \draw (-0.5,-1.5) -- (0,-1);
    \draw[->] (0,-1) -- (0.5,-0.5);
    \draw (0.5,-0.5) -- (1,0);
    \draw[->] (0,-2) -- (0.5,-1.5);
    \draw (0.5,-1.5) -- (1,-1);
    \draw[->] (1,-1) -- (1.5,-0.5);
    \draw (1.5,-0.5) -- (2,0);
    \draw[->] (2,-1) -- (2.5,-0.5);
    \draw (2.5,-0.5) -- (3,0);
    \draw[->] (4,-1) -- (4.5,-0.5);
    \draw (4.5,-0.5) -- (5,0);
  \end{tikzpicture}
  \end{center}
  \caption{A diagram representing a point in the quiver variety
  corresponding to a level one representation (of highest weight
  $\omega_k$) of the Lie algebra $\widehat{\mathfrak{sl}}_n$
  \label{S:fig:level-one-affine-pic}}
  \end{figure}
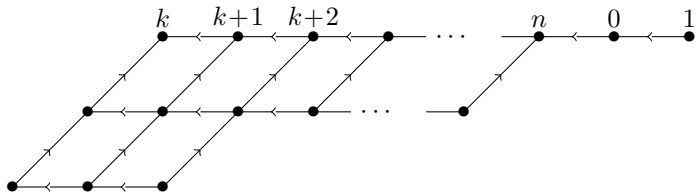
  If we straighten this picture, flip it (in the line $x=y$ if we view
  the picture as lying in the standard $(x,y)$-plane), and replace
  vertices by boxes, we obtain Figure~\ref{S:fig:Young-wall},
  \begin{figure}
  \begin{center}
  \begin{tikzpicture}
    \draw (0,0) -- (0,4.2);
    \draw[loosely dotted] (0,4.2) -- (0,4.8);
    \draw (0,4.8) -- (0,8);
    \draw (-1,0) -- (-1,4.2);
    \draw[loosely dotted] (-1,4.2) -- (-1,4.8);
    \draw (-1,4.8) -- (-1,8);
    \draw (-2,0) -- (-2,4.2);
    \draw[loosely dotted] (-2,4.2) -- (-2,4.8);
    \draw (-2,4.8) -- (-2,6);
    \draw (-3,0) -- (-3,3);
    \draw (-3,0) -- (0,0);
    \draw (-3,1) -- (0,1);
    \draw (-3,2) -- (0,2);
    \draw (-3,3) -- (0,3);
    \draw (-2,4) -- (0,4);
    \draw (-2,5) -- (0,5);
    \draw (-2,6) -- (0,6);
    \draw (-1,7) -- (0,7);
    \draw (-1,8) -- (0,8);
    \draw (-0.5,0.5) node {$k$};
    \draw (-0.5,1.5) node {$k+1$};
    \draw (-0.5,2.5) node {$k+2$};
    \draw (-0.5,3.5) node {$k+3$};
    \draw (-0.5,5.5) node {$n$};
    \draw (-0.5,6.5) node {$0$};
    \draw (-0.5,7.5) node {$1$};
    \draw (-1.5,0.5) node {$k-1$};
    \draw (-1.5,1.5) node {$k$};
    \draw (-1.5,2.5) node {$k+1$};
    \draw (-1.5,3.5) node {$k+2$};
    \draw (-1.5,5.5) node {$n-1$};
    \draw (-2.5,0.5) node {$k-2$};
    \draw (-2.5,1.5) node {$k-1$};
    \draw (-2.5,2.5) node {$k$};
  \end{tikzpicture}
  \end{center}
  \caption{The Young wall corresponding to the quiver representation of
  Figure~\ref{S:fig:level-one-affine-pic}. \label{S:fig:Young-wall}}
  \end{figure}
  which is simply one of the Young walls appearing in
  \lecturecite{Section~\ref{K:section_Young}}{\cite[Section~7]{Kang}}.
  Since we are no longer in the finite type case, we must impose the
  nilpotency condition.  This corresponds to the condition that the
  Young walls be proper.

  \item  In the case where $\g=\widehat{\mathfrak{sl}}_n$ and
  $\omega_\bw = \sum_{i=0}^n w_i \omega_i$
  is arbitrary (i.e. we consider general dominant integral weights),
  we get $w_i$ Young walls associated to each vertex $i$ as in the
  case of $\mathfrak{sl}_n$.  Again, choosing a convention to avoid
  double counting becomes a condition on the heights of columns
  appearing. What remains are combinatorial objects called \emph{Young
  pyramids}. See Figure~\ref{S:fig:Young-pyramid} for an example of a
  Young pyramid.  Further details on these objects and their
  connections to quiver varieties can be found in \cite{Sav04b}.

  \begin{figure}
  \begin{center}
  \begin{tikzpicture}[scale=0.7]
    \planepartition{{1,1,5,5,4,3,2,2},{1,1,4,4,4,2,1},{1,3,3,2,2,1},{1,2,2,1},{2,1}}
  \end{tikzpicture}
  \end{center}
  \caption{A Young pyramid corresponding to the highest weight
  $2\omega_0 + 2 \omega_1 + \omega_2$.  We think of each vertical wall
  running diagonally from top-left to bottom-right as a Young diagram
  or Young wall. The positions of the leftmost columns of these walls
  are determined by the highest weight.  Here the leftmost columns of
  the back two walls are aligned (corresponding to the summand of
  $2\omega_0$), the next two walls are aligned one position to the
  left (corresponding to the summand of $2\omega_1$) and the frontmost
  wall starts one position further to the left (corresponding to
  the summand $\omega_2$). \label{S:fig:Young-pyramid}}
  \end{figure}
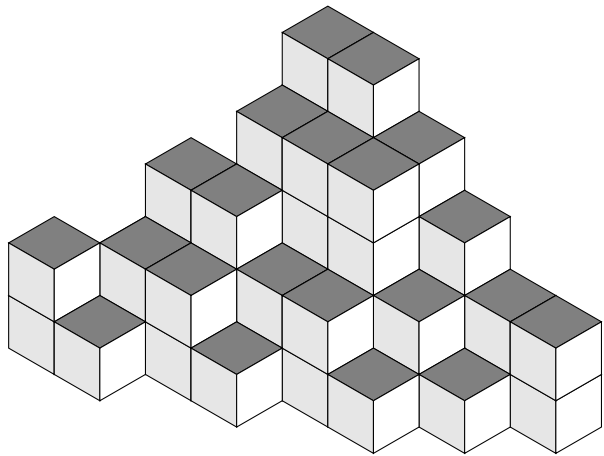
\end{enumerate}
\end{remarks}

%
%

\bibliographystyle{abbrv}
\bibliography{biblist}

\vspace{4mm} \noindent \small{Alistair Savage: {\it Department of
Mathematics and Statistics, University of Ottawa, 585 King Edward
Ave, Ottawa, ON, Canada K1N 6N5.}}

\end{document}